\numberwithin{equation}{section}
\def\titlerunning#1{\gdef\titrun{#1}}
\def\author#1{\gdef\autrun{\def\and{\unskip, }#1}\gdef\@author{#1}}
\def\address#1{{\def\and{\\\hspace*{18pt}}\renewcommand{\thefootnote}{}%
\footnote {#1}}%
\markboth{\autrun}{\titrun}}
\def\email#1{e-mail: #1}
\def\keywords#1{\par\medskip
\noindent\textbf{Keywords.} #1}
\def\subjclass#1{\par\medskip
\noindent\textbf{Mathematics Subject Classification (2010).} #1}
\theoremstyle{plain}
\newtheorem{theorem}{Theorem}[section]
\newtheorem{proposition}[theorem]{Proposition}
\newtheorem{lemma}[theorem]{Lemma}
\theoremstyle{definition}
\newtheorem{defin}[theorem]{Definition}
\newtheorem{remark}[theorem]{Remark}
\newtheorem{example}{Example}
\theoremstyle{remark}
\def\be{\begin{equation}}
\def\ee{\end{equation}}
\def\bk{\color{black}}
\def\huz{H^1_0(\Omega)}
\def\into{\int_{\Omega}}
\def\dys{\displaystyle}
\def\vare{\varepsilon}
\def\luo{L^{1}(\Omega)}
\def\rn{\mathbb{R}^{N}}
\def\re{\mathbb{R}}
\begin{document}

\titlerunning{Finite and Infinite energy solutions of singular elliptic problems}
\title{Finite and Infinite energy solutions of singular elliptic problems: Existence and Uniqueness}

\author{Francescantonio Oliva \and Francesco Petitta}	

\date{}

\maketitle
%
%
%
	
	\address{F. Oliva, F. Petitta: SBAI Department,
 Sapienza University of Rome, Via Scarpa 16, 00161 Roma, Italy; \email{francesco.oliva@sbai.uniroma1.it}, \email{francesco.petitta@sbai.uniroma1.it}
}

		
	\begin{abstract}
We establish existence and uniqueness of solution for  the homogeneous  Dirichlet problem associated to a fairly general class of   elliptic equations modeled by
$$
 -\Delta u= h(u){f} \   \ \text{in}\,\  \Omega, 
$$
where $f$ is an irregular datum, possibly a measure,  and $h$ is a continuous function that may blow up at zero. We also provide  regularity results on both the solution and the lower order term depending on the regularity of the data, and we discuss their optimality.

\keywords{{Nonlinear elliptic equations, Singular elliptic equations, Uniqueness, Measure data}}

\subjclass{{35J60, 35J61, 35J75, 35A05, 35R06}}
\end{abstract}

\section{Introduction}
Let $\Omega$ be a bounded and  smooth open subset of $\rn$, and  consider, as a model, the following singular elliptic boundary-value problem
\begin{equation}
\begin{cases}
\displaystyle -{\rm div} (A(x) \nabla u)= \frac{f}{u^{\gamma}} &  \text{in}\, \Omega, \\
u=0 & \text{on}\ \partial \Omega,
\end{cases}\label{1in}
\end{equation}
where $\gamma>0$, $A(x)$ is a bounded  elliptic matrix, and $f$ is nonnegative. 

\medskip
  Physical motivations in the study of problems as  \eqref{1in} arise, for instance,  in  the study of thermo-conductivity where ${u^{\gamma}}$ represents the resistivity of the material (\cite{fm}),   in signal transmissions (\cite{no}), and in   the theory  non-Newtonian pseudoplastic fluids (\cite{nc}).  See also \cite{var} (and references therein) for a precise  description of a model of boundary layers in which equations as in \eqref{1in}  also appear.

From the purely theoretical  point of view, after the first pioneering existence and uniqueness result given in  \cite{fm}, a systematic treatment of problems as   \eqref{1in} was developed starting from \cite{stu,crt}.

Consider, for simplicity,  $A(x)=I$, i.e. the case of the laplacian as principal operator; if  $f$ is smooth enough  (say H\"older continuous)  and bounded away from zero on $\Omega$ then the existence and uniqueness of a classical solution  to \eqref{1in} is proven by desingularizing the problem and then by applying a  suitable sub- and super-solution method.  Some remarkable refinements  of the previous results were given in \cite{lm}; here, the authors proved, in particular,  that $u\not \in C^{1}(\overline{\Omega})$ if $\gamma>1$ and it has finite energy, i.e. $u\in \huz$, if and only if $\gamma<3$ (see also \cite{gl} for further insights). 

Classical theory for equations as in  \eqref{1in}, also called singular Lane--Emden--Fowler equations, has been also extended  to  the case in which the term ${s^{-\gamma}}$ is replaced by a $C^{1}$ nonincreasing nonlinearity $h(s)$ that  blows up at zero  at a given rate (see \cite{stu,crt,ls,zc}). 

\medskip

More general situations can be considered. Let $f$ be a  nonnegative  function belonging to some $L^m(\Omega)$, $m\geq1$, or even,  possibly,  a measure.  If $f\in \luo$, in \cite{bo}, the existence of a distributional solution $u$ to \eqref{1in}  is proved. In particular the authors  prove that a locally strictly positive  function $u$  exists such that  equation in \eqref{1in} is satisfied in the sense of distributions: moreover $u\in W^{1,1}_0(\Omega)$ if $\gamma<1$, $u\in\huz$ if $\gamma=1$, and $u\in H^1_{loc} (\Omega)$  if $\gamma>1$, where,  in the latter case,  the boundary datum is only assumed in a weaker sense than the usual one of traces, i.e. $u^{\frac{\gamma+1}{2}}\in H^1_{0} (\Omega)$. Note that, if $\gamma>1$, then  solutions with infinite energy do exist, even for smooth data (\cite{lm}).  Let us also   mention \cite{gmp,gmp2} where,  in order to deal with homogenization issues, existence and uniqueness of finite energy solutions  are considered  for $f\in L^{m}(\Omega)$, $m>\frac{N}{2}$,   also in the case of a continuous  nonlinearity $h(s)$ that mimics ${s^{-\gamma}}$.

In the  case of $f$ being a measure  the situation becomes striking different. Nonexistence of solutions to problem \eqref{1in} is proven (at least in the sense of approximating sequences) in \cite{bo} if the measure is too concentrated, while  in  \cite{po} sharp existence results are obtained in the measure is diffuse enough; here concentration and diffusion is intended in the sense of capacity. For general, possibly singular, measures data existence of a distributional/renormalized solution is considered in \cite{do} also in the case of a more general (not necessarily monotone) nonlinearity.

Without the aim to be exhaustive we also refer the reader to the following papers and to the references cited therein in which various relevant extensions and refinements (nonhomogeneous case, variational approach, nonlinear principal operators, natural growth terms, etc, ...) are considered: \cite{t,gow,cd, c,a6,gps1,gps2,cgs,cc,sz,edr, dc,cst}.

\medskip 

{Consider now  the boundary-value problem 
\begin{equation}
\begin{cases}
\displaystyle Lu= h(u)f &  \text{in}\, \Omega, \\
u=0 & \text{on}\ \partial \Omega,
\end{cases}\label{luno}
\end{equation}
where $L$ is a linear elliptic operator in divergence form,  $h: \re^+ \to \re^+$ is a continuous function that  may \bk blow  up at $s=0$ and possesses a limit at infinity, and  $f$ is a nonnegative  function in $L^{1}(\Omega)$ (or, possibly, a bounded Radon  measure on $\Omega$).

Besides the one  arising from the  presence of  possibly \bk a measure datum, new  difficulties have to be taken into account in this general framework; even  if $f$ is only a nonnegative function, then the solutions do not belong in general to $\huz$ even for small $\gamma$ (see Example \ref{luigi} below) nor the lower order term in \eqref{luno} need to belong to $L^1(\Omega)$ (Example \ref{ex}).

  In particular, as also observed in \cite{sz}, the question of the  summability properties of the lower order term in \eqref{luno} plays a crucial role in order to deal with uniqueness of  solutions (see \cite{po,op}, for some partial results in this direction).  In general, in fact, only finite energy solutions are known to be unique, at least in the model case \eqref{1in} (see \cite{boca,cst}). 

\medskip

In the present paper, under fairly general assumptions we introduce a natural  notion of distributional  solution to problem \eqref{luno} for which  existence can be shown to hold. Moreover,  uniqueness holds  provided $h$ is nonincreasing. As we already mentioned, particular care will  be addressed on how the homogeneous boundary datum for the solution $u$ is (weakly) attained. 

 If $f$ is a function in $L^{m}(\Omega)$, $m\geq 1$, we also investigate the question of whether the solution  to problem \eqref{luno} has finite energy. We provide several instances of this occurrence  depending on the regularity of the datum and on the behavior of $h(s)$ both at zero and at infinity.   Additionally, we obtain sharp thresholds for the lower order term to belong to $L^{1}(\Omega)$. The results and their optimality are  discussed  through appropriate  examples.  

Also considering the obstructions given by the above mentioned examples,   we will finally be lead to establish a  weighted  summability estimate on the lower order term  $h(u)f$;  this will be a key tool in order to prove  uniqueness, which will be obtained by mean of a suitable Kato type inequality.

\medskip 

{The plan of the paper is as follows:
in Section \ref{sec2} we precise the structural assumptions we are going to  work with, we give our definition of solution for problem \eqref{luno}, and we state our existence (Theorem \ref{esiste}) and uniqueness (Theorem \ref{uniquenessmain}) results; we then provide some useful preliminary tools we will need. In particular, Section \ref{veron} contains some  outcomes \bk concerning the linear case and a Kato type inequality with measures. Section \ref{finite} consists of an   extenstive \bk account on the case of finite energy solutions; moreover,  in Section \ref{furt} we also present a prototypical example and we address the question of the integrability of the lower order term $h(u)f$.  Section \ref{uniqueness_distributional} is then devoted to the proof of Theorems \ref{esiste} and \ref{uniquenessmain}. Finally, in Section \ref{furte} we discuss some further examples and possible extensions. 
 }

\subsection{Notations and auxiliary functions}

We will use the following well known auxiliary functions defined for fixed $k>0$  
$$T_k(s)=\max (-k,\min (s,k)), \ \ \ \ \ G_k(s)=(|s|-k)^+ \operatorname{sign}(s),$$
with  $s\in\re$. Observe that $T_{k}(s)+G_{k}(s)=s$, for any $s\in \re$ and $k>0$.  

 We denote the distance of a point $x\in \Omega$ from the boundary of the set in the following way 
$$\delta(x):= \operatorname{dist}(x, \partial \Omega)\,.$$
As $\partial \Omega$ is smooth enough we will use systematically the fact that 
$$
\int_{\Omega} \delta(x)^r\ dx<\infty\,,
$$
if and only if $r>-1$.  Moreover, with a little abuse of notation, in order to avoid technicalities and without loosing  generality, we can refer to  $\delta(x)$ as a suitable positive smooth (say $C^{1}$) modification of the distance function which agrees with $\delta (x)$ in a neighbourhood of $\partial \Omega$.

We also  need to define the following $\vare$-neighborhood of $\partial\Omega$: 
$$\Omega_{\varepsilon}=\{x\in\Omega : \delta(x)<\varepsilon\},$$ that we will always assume to be smooth (up to the choice of a suitable small $\vare$).

The space of bounded Radon measures  will be denoted by $\mathcal{M}(\Omega)$, while we will also  made  use of the following weighted spaces 
$$
\dys L^1(\Omega,\delta)=\left\{f\in L^{1}_{loc}(\Omega): \into |f| \delta\ dx <\infty\right\}\ \ \text{and}\ \ \mathcal{M}(\Omega,\delta)=\left\{\mu\in  \mathcal{M}_{loc}(\Omega): \into  \delta\ d|\mu| <\infty\right\}\,.
$$ 

For an integer $j$,  
$$C^j_0(\overline{\Omega}):= \{\phi\in C^j(\overline{\Omega}): \phi=0 \text{  on  }\partial \Omega\},$$
will denote the space of $C^{j}$ functions that vanishes at $\partial\Omega$. As usual, subscript $c$ will indicate a space of function with compact support in $\Omega$, e.g. $C_c (\Omega)$.

If no otherwise specified, we will denote by $C$ several constants whose value may change from line to line and, sometimes, on the same line. These values will only depend on the data but they will never depend on the indexes of the sequences we will often introduce. 
Moreover for the sake of simplicity we use the simplified notation $$\int_{\Omega} f:=\int_{\Omega} f(x)\ dx\,,$$ when referring to integrals when no ambiguity is possible.

\section{Setting of the problem  and some preliminary tools}\label{sec2}
\setcounter{equation}{0}

\subsection{Main assumptions and setting of the problem} Let $\Omega$ be a open and bounded domain  of $\rn$, $N\geq 2$, with smooth boundary, and consider the following homogeneous boundary-value problem
\begin{equation}
\begin{cases}
\displaystyle Lu= h(u)\mu &  \text{in}\, \Omega, \\
u=0 & \text{on}\ \partial \Omega,
\end{cases}\label{1}
\end{equation}
where $L$ is a strictly elliptic linear operator in divergence form, that is
$$
Lu:=-\operatorname{div} (A(x)\nabla u)\,,
$$
where $A$ is a matrix with Lipschitz continuous coefficients such that 
\begin{equation}
A(x)\xi \cdot \xi \ge \alpha|\xi|^2\,\  \ \text{and}\ \  \ |A(x)|\le \beta\,, \label{lipA}\,
\end{equation} 
\noindent for every $\xi$ in $\mathbb{R}^N$, for almost every $x$ in $\Omega$ and for $\alpha,\beta>0$.   As we will see,  the  smoothness we assume on both $\partial\Omega$ and the coefficients of the matrix $A(x)$ are necessary in order to use standard $L^{p}$ elliptic regularity theory,  Hopf's  boundary point lemma, and some further technical devices;  we shall stress as some of the results we present require less regularity assumptions. \bk
\medskip

On the nonlinearity $h:\re^{+}\to\re^{+}$ we assume that it is continuous,  such that \begin{equation}\displaystyle \lim_{s\to 0^+}  h(s)\in (0,\infty]\bk\,,\ \ \ \text{and} \ \ \lim_{s\to \infty} h(s):=h(\infty)<\infty\,.\label{infty}\end{equation}  We also assume the following growth condition near zero
\begin{equation}\label{h1} 
\displaystyle \exists\;\ {K_1},\underline{\omega}>0\;\ \text{such that}\;\  h(s)\le \frac{K_1}{s^\gamma} \ \ \text{if} \ \ s<\underline{\omega},
\end{equation}
with $\gamma>0$.  We explicitly remark that we do not assume any control from below on the function  $h$ so that the case of a bounded and continuous function satisfying \eqref{infty} is allowed. 

Finally, $\mu$ will be a nonnegative measure in $\mathcal{M}(\Omega)$.  In particular, recall that $\mu$ can be (uniquely) decomposed as
$$\mu=\mu_d + \mu_c,$$
where, resp.  $\mu_d\ge 0$ is absolutely continuous with respect to the harmonic capacity (also called $2$-capacity) and $\mu_c\ge 0$ is concentrated on a set of zero $2$-capacity  (see for instance \cite{fuku}). We refer the reader to \cite{hkm}  for an exhaustive introduction to the theory of capacity (see also \cite{dmop} which contains a nice introduction to the topic which is sufficient to our purposes). Also, we will always assume, without loss of generality, that $\mu_{d}\not\equiv 0$. Otherwise, in fact, the singularity of the problem virtually disappears and one is  
brought to nonsingular problem  (see Remark \ref{weakdef} below,  and  the discussion in \cite[Section 5]{do}). 

If $\mu_{c}=0$ we will call $\mu$ a diffuse measure in $\mathcal{M}(\Omega)$. Recall (see \cite{bgo}) that, if $\mu$ is diffuse, then the following decomposition holds in the sense of distributions 
\begin{equation}\label{bgodec}
\mu=g-{\rm div\,}G\,,
\end{equation}
where $g\in L^1(\Omega)$ and $G\in L^2(\Omega)^N$. It is proven in \cite{dmop} that such a $\mu$ can be approximated in the narrow topology of measures by  smooth functions $\nu_{n}$, that is 
\begin{equation}\displaystyle \lim_{n\to\infty}\label{narrowvera}\int_{\Omega} \nu_n \phi = \int_{\Omega} \phi d\mu, \ \ \ \forall \phi \in C(\overline{\Omega})\,,\end{equation}
such that 
$$
\nu_{n}=g_{n}-{\rm div\,}G_{n}\,,
$$
where $g_n$ weakly converges in $L^1(\Omega)$ to $g$ and $G_n$ strongly converges in $L^2(\Omega)^N$ to $G$. 

Concerning the boundary datum, our solutions are not expected in general to belong to $W^{1,1}_{0}(\Omega)$ (see for instance \cite{bo,cst, po}). Due to this fact, we shall need to infer that our  (positive) solution  $u\in L^{1}(\Omega)$ assumes the value zero at $\partial \Omega$ in the following sense 
\begin{equation}\label{bd}
\displaystyle \lim_{\vare \to 0}\frac{1}{\vare}\int_{\Omega_{\vare}} u =0\,,
\end{equation}
which is known to be weaker than  the classical sense of traces for functions in  $W^{1,1}_{0}(\Omega)$ (see for instance \cite{sel,afp}).

\medskip 

To deal with both  existence and uniqueness for solutions to our problem we give the following variant of the definition of distributional solution to \eqref{1}  given in \cite{do} (and inspired by \cite{mp}). 
\begin{defin}\label{def}
A positive function  $u\in L^{1}(\Omega)\cap W^{1,1}_{loc}(\Omega)$ is a \emph{distributional solution} to problem \eqref{1} if $  h(u) \in L^1_{loc}(\Omega,\mu_d)$,  \eqref{bd} holds, and  following is satisfied  
	\begin{equation} \displaystyle \int_{\Omega}A(x)\nabla u \cdot \nabla \varphi =\int_{\Omega} h(u)\varphi\mu_d + h(\infty)\int_{\Omega} \varphi\mu_c \ \ \ \forall \varphi \in C^1_c(\Omega).\label{weakdef3}\end{equation}
\end{defin}  
\begin{remark}\label{weakdef}
Asking for  $ h(u) \in L^1_{loc}(\Omega,\mu_d)$  gives sense to  the right hand side of \eqref{weakdef3}, while, as $u$ is in $ W^{1,1}_{loc}(\Omega)$,  the left hand side makes sense as well. 

Recall we are assuming $\mu_{d}\not\equiv 0$; if this is not the case, in fact, \eqref{weakdef3} becomes linear and it can be treated with classical tools (see Section \ref{veron}). 

If $h(\infty)=0$ then the concentrated part of the measure disappears  in \eqref{weakdef3}, so we are just solving problem \eqref{1} with datum $\mu_{d}$; this reflects a well known concentration type phenomenon of the approximating sequences of solutions when the datum is too concentrated. With this sort of nonexistence result in mind (see \cite{bo, po, do} for further details on these phenomena),   we will always understand that $h(\infty)\not = 0$ if $\mu_{c}\not\equiv 0$ in order to be consistent with the fact that we are solving \eqref{1}. \bk 
\end{remark}

Our main existence and uniqueness results are the content of   the following theorems whose  proofs will be given in Section \ref{uniqueness_distributional}.
\begin{theorem}\label{esiste}
Let $h$ be a continuous function satisfying \eqref{infty} and \eqref{h1},  and let $\mu\in \mathcal{M}(\Omega)$  be a nonnegative measure. Then a solution to problem \eqref{1} in the sense of Definition \ref{def} does exist. 
\end{theorem}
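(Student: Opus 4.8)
The plan is to prove existence by approximation. I would introduce a sequence of non-singular, regularized problems whose solutions converge to a solution of \eqref{1}. The natural choice is to truncate the singularity of $h$ near zero and to regularize the measure datum. Concretely, using the approximation \eqref{narrowvera} for the diffuse part, I would take smooth functions $\nu_n$ approximating $\mu$ (splitting into $\mu_d$ and $\mu_c$ as needed) and replace $h$ by a truncated, bounded version; a standard device is to solve
\begin{equation*}
\begin{cases}
\displaystyle Lu_n = h_n(u_n)\,\nu_n & \text{in}\ \Omega,\\
u_n=0 & \text{on}\ \partial\Omega,
\end{cases}
\end{equation*}
where $h_n(s):=\min(h(s),n)$ (or $h(s+\tfrac1n)$ suitably capped) and $\nu_n$ is a smooth nonnegative function converging to $\mu$ narrowly. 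Since $h_n$ is bounded and continuous and $\nu_n$ is smooth, existence of a weak solution $u_n\in\huz$ for each fixed $n$ follows from classical theory via Schauder's fixed point theorem applied to the map $v\mapsto u$ solving $Lu=h_n(v)\nu_n$.

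The key analytical steps are then the a priori estimates needed to pass to the limit. First I would establish a \emph{local lower bound} $u_n\ge c_\omega>0$ on every compactly contained $\omega\Subset\Omega$, uniform in $n$; this uses the maximum principle together with $\mu_d\not\equiv0$ and Hopf's lemma, and it is what makes $h(u_n)$ controlled locally so that $h(u)\in L^1_{loc}(\Omega,\mu_d)$ in the limit. Second, I would derive a uniform bound on $u_n$ in $L^1(\Omega)$ and on $\nabla u_n$ in $W^{1,1}_{loc}(\Omega)$ (or in some Marcinkiewicz/Sobolev space), together with the weighted estimate $\int_\Omega h(u_n)\nu_n\,\delta\le C$, which is the quantitative control of the lower order term against the distance function. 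Testing with functions built from $\delta$, or with the solution of an auxiliary problem with right-hand side $1$, typically yields both the $L^1$ bound and the boundary behavior \eqref{bd}; this is where the smoothness of $\partial\Omega$ and $L^p$ elliptic regularity enter, allowing one to control $\frac1\varepsilon\int_{\Omega_\varepsilon}u_n$ uniformly.

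With these estimates in hand, I would extract a subsequence converging to some $u$: weakly in $W^{1,1}_{loc}(\Omega)$ and almost everywhere, with $u\in L^1(\Omega)$ and $u>0$ locally. Passing to the limit in the weak formulation tested against $\varphi\in C^1_c(\Omega)$ is the step where the measure decomposition matters: on the diffuse part one uses the $L^1$-weak and $L^2$-strong convergence of $g_n,G_n$ from \eqref{bgodec}–\eqref{narrowvera} together with a.e.\ convergence of $h_n(u_n)$ and a Vitali/Fatou argument to get $\int h(u)\varphi\,\mu_d$, while the concentrated part $\mu_c$, seen only by the value $h(\infty)$, contributes the term $h(\infty)\int\varphi\,\mu_c$. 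Finally I would verify the boundary condition \eqref{bd} passes to the limit using the uniform control of $\frac1\varepsilon\int_{\Omega_\varepsilon}u_n$ and lower semicontinuity.

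The main obstacle I expect is the \textbf{passage to the limit in the lower order term near the boundary and near the concentration set of $\mu$}. Away from $\partial\Omega$ the uniform local lower bound tames the singularity of $h$, but a genuine difficulty is to show that no mass of $h_n(u_n)\nu_n$ is lost or created at the boundary and that the concentrated part converges exactly to $h(\infty)\mu_c$ rather than to something smaller; controlling this requires the weighted estimate on $h(u_n)\nu_n$ in $L^1(\Omega,\delta)$ and a careful decomposition of $\nu_n$ matching $\mu_d$ against $\mu_c$. Establishing that weighted bound uniformly, and ruling out concentration of the singular term at $\partial\Omega$, is the delicate technical heart of the argument.
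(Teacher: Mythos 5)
Your overall strategy coincides with the paper's: the paper approximates with the desingularized problems \eqref{1n} (with $h_n=T_n(h)$ and $f_n$ smooth, suitably converging to $\mu$), uses the locally uniform lower bound \eqref{sotto}, extracts an a.e.\ limit $u$, and recovers \eqref{weakdef3} with the diffuse part producing $\into h(u)\varphi\,\mu_d$ and the concentrated part producing exactly $h(\infty)\into\varphi\,\mu_c$; all of this is quoted from \cite[Theorem 3.3]{do}. Note, though, that what you call the delicate technical heart --- the uniform bound on $h_n(u_n)\nu_n$ in $L^1(\Omega,\delta)$ and the exclusion of mass loss at $\partial\Omega$ --- is not needed for existence at all: the identity \eqref{weakdef3} is tested only against $\varphi\in C^1_c(\Omega)$, so nothing that happens near the boundary enters it, and the weighted estimate appears in the paper only later (Lemma \ref{M1respectdelta}), derived a posteriori for any distributional solution, as the key tool for \emph{uniqueness}.

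The genuine gap is your verification of the boundary condition \eqref{bd}, which is precisely the one step the paper proves in detail. You propose to obtain it by testing with functions built from $\delta$ (or with the solution of $L^{*}\xi=1$), claiming this controls $\frac{1}{\vare}\int_{\Omega_\vare}u_n$ \emph{uniformly}. Such tests only give the global bound $\into u_n\le C$, equivalently the weighted bound on the right hand side; a bound on boundary averages that is uniform in $n$ and $\vare$ is not \eqref{bd}, which requires $\lim_{\vare\to0}\frac{1}{\vare}\int_{\Omega_\vare}u=0$, and for $\gamma>1$ one cannot fall back on $u\in W^{1,1}_0(\Omega)$, since that may fail. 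The paper's device is different: testing \eqref{1n} with $T_r(G_k(u_n))$ yields \eqref{gk}, i.e.\ $G_k(u_n)$ bounded in $W^{1,q}_{0}(\Omega)$ for $q<\frac{N}{N-1}$, whence $G_1(u)\in W^{1,1}_0(\Omega)$, which does satisfy \eqref{bd}; combining with the truncation estimate \eqref{trunc}, $T_1(u)^{\frac{\gamma+1}{2}}\in\huz$ when $\gamma>1$, H\"older's inequality together with $|\Omega_\vare|\le C\vare$ gives
$$
\frac{1}{\vare}\int_{\Omega_\vare}T_1(u)\le C\left(\frac{1}{\vare}\int_{\Omega_\vare}T_1(u)^{\frac{\gamma+1}{2}}\right)^{\frac{2}{\gamma+1}}\stackrel{\vare\to0}{\longrightarrow}0\,,
$$
so writing $u=T_1(u)+G_1(u)$ proves \eqref{bd} (for $\gamma\le1$ one gets directly $u\in W^{1,1}_0(\Omega)$). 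Without this decomposition, or an equivalent substitute (for instance a Green-function representation argument, which would in turn force you to resolve the boundary-concentration issue you explicitly left open), your proof of \eqref{bd} does not close.
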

\begin{theorem}\label{uniquenessmain}
Under the same  assumptions of Theorem \ref{esiste}, if $h$ is nonincreasing,  then the distributional  solution to problem \eqref{1} is unique.
\end{theorem}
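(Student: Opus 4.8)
The plan is to compare two solutions directly, turning the monotonicity of $h$ into a sign condition via a Kato type inequality, and then closing the argument with a maximum principle tailored to the weak boundary condition \eqref{bd}. Let $u_1,u_2$ be two distributional solutions in the sense of Definition \ref{def} and set $w=u_1-u_2\in W^{1,1}_{loc}(\Omega)$. First I would subtract the two formulations \eqref{weakdef3}: since both solutions carry the \emph{same} concentrated forcing $h(\infty)\mu_c$, these terms cancel and one is left, in $\mathcal{D}'(\Omega)$, with
\[
Lw=\big(h(u_1)-h(u_2)\big)\mu_d .
\]
Note the right-hand side is a \emph{diffuse} measure (absolutely continuous with respect to capacity, being $\mu_d$ times a function), so no concentrated part survives in $Lw$. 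The crucial preliminary input here is the weighted summability estimate obtained in Section \ref{furt}, which guarantees $h(u_i)\mu_d\in\mathcal{M}(\Omega,\delta)$; together with \eqref{bd}, this is what will later allow me to enlarge the admissible test class from $C^1_c(\Omega)$ up to $C^1_0(\overline{\Omega})$, i.e. to test \emph{up to the boundary}.

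Next I would apply the Kato type inequality with measures established in Section \ref{veron} to $w$, obtaining
\[
Lw^+\le \chi_{\{u_1>u_2\}}\big(h(u_1)-h(u_2)\big)\mu_d
\]
as measures; the absence of a concentrated part in $Lw$ makes this inequality clean. Now the monotonicity hypothesis enters: on the set $\{u_1>u_2\}$ one has $h(u_1)\le h(u_2)$, and since $\mu_d\ge 0$ the whole right-hand side is a nonpositive measure. Hence $Lw^+\le 0$, that is, $w^+$ is a nonnegative subsolution of the homogeneous equation.

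It remains to deduce $w^+\equiv 0$, and this maximum-principle step is where I expect the main difficulty, precisely because the boundary datum is only attained in the weak sense \eqref{bd} and $w^+$ lies only in $W^{1,1}_{loc}(\Omega)$. My plan is a duality argument: let $\phi$ solve the adjoint problem $L^*\phi=1$ in $\Omega$ with $\phi=0$ on $\partial\Omega$; by $L^p$ elliptic regularity (using the smoothness of $\partial\Omega$ and the Lipschitz coefficients of $A$) and Hopf's lemma, $\phi\in C^1_0(\overline{\Omega})$, $\phi>0$, and $\phi$ behaves like $\delta$ near $\partial\Omega$. Pairing $Lw^+\le 0$ with $\phi$ would yield $\into w^+=\into \phi\, dLw^+\le 0$, forcing $w^+=0$, i.e. $u_1\le u_2$; exchanging the roles of $u_1,u_2$ then gives equality. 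The delicate part is justifying this pairing, since $\phi$ is not compactly supported: I would approximate $\phi$ by localizations vanishing in $\Omega_{\vare}$ and control the two boundary remainders, namely a term involving $\tfrac1\vare\into[\Omega_{\vare}] w^+$ — which vanishes by \eqref{bd}, as $0\le w^+\le u_1$ — and a boundary flux of $\phi$ against $\nabla w^+$, which is handled by the decay $\phi\asymp\delta$ together with the weighted bound $h(u_i)\mu_d\in\mathcal{M}(\Omega,\delta)$. Thus the weak boundary condition and the weighted integrability estimate are exactly the two ingredients that render the duality rigorous and complete the proof.
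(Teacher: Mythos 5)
Your overall strategy --- weighted summability of the right-hand side, Kato's inequality plus monotonicity to get $L(u_1-u_2)^+\le 0$, then a duality argument with $L^*\xi=1$ closed via the cutoffs and the boundary condition \eqref{bd} --- is exactly the paper's. However, there is a genuine gap in the \emph{order} of your steps. The Kato inequality available here (Lemma \ref{Katoinequalitylocal}) is stated, and proved, for \emph{the very weak solution} of problem \eqref{pbVeron} in the sense of Definition \ref{verondefin}: its proof runs through the specific approximation scheme $\delta f_n=g_n-{\rm div}\,G_n$ and the local $H^1$ bound on truncations of Lemma \ref{TroncataH1loc}, neither of which is available for an arbitrary function $w\in L^1(\Omega)\cap W^{1,1}_{loc}(\Omega)$ that merely satisfies $Lw=(h(u_1)-h(u_2))\mu_d$ against test functions in $C^1_c(\Omega)$. (A competitor solution in the uniqueness statement carries only the regularity of Definition \ref{def}; the truncation estimates \eqref{trunc} are known for the solution constructed by approximation, not for an arbitrary one.) This is precisely why the paper enlarges the test class \emph{first}: using Lemma \ref{M1respectdelta}, \eqref{bd} and the cutoffs $\eta_k$, each solution is shown to satisfy the transposition identity \eqref{udistribuzionaleveron} for every $\phi\in C^1_0(\overline\Omega)$ with $L^*\phi\in L^\infty(\Omega)$; then, by uniqueness of very weak solutions (Theorem \ref{exi}), the difference of two solutions \emph{is} the very weak solution with datum $(h(v)-h(w))\mu_d$, and only at that point can Lemma \ref{Katoinequalitylocal} be invoked. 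In your plan the enlargement is deferred to the final maximum-principle step, so at the moment you apply Kato its hypotheses are not met. The fix is simply to perform the test-class enlargement before, not after, the Kato step; otherwise you would need to prove a purely local Kato inequality for $W^{1,1}_{loc}$ functions with diffuse measure data, which is additional work not contained in the paper.

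Two smaller points. First, the weighted estimate $h(u_i)\mu_d\in\mathcal{M}(\Omega,\delta)$ is Lemma \ref{M1respectdelta}, proved in Section \ref{uniqueness_distributional}, not a result of Section \ref{furt}; it applies to each solution separately because $u_i\ge 0$ and $Lu_i=h(u_i)\mu_d+h(\infty)\mu_c\ge 0$. Second, in your final pairing you foresee ``a boundary flux of $\phi$ against $\nabla w^+$''; such a term cannot be controlled, since $w^+$ is only in $W^{1,1}_{loc}(\Omega)$ and nothing is known about $\nabla w^+$ near $\partial\Omega$. The correct bookkeeping (as in the derivation of \eqref{udistribuzionaleveron}) integrates by parts so that \emph{all} derivatives fall on the test pair $\phi\eta_k$, leaving near the boundary only zeroth-order terms of size $k\int_{\{\delta<1/k\}}w^+$ and $k^2\int_{\{\delta<1/k\}}w^+|\phi|$, both of which vanish by \eqref{bd} since $0\le w^+\le u_1$ and $|\phi|\le C\delta$.
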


\medskip

\subsection{The linear case and a Kato type lemma}
\label{veron}
\noindent Here we briefly discuss the linear case  (say $h\equiv 1$) of problem \eqref{1} with data in a suitable weighted class.  We first recall the existence and uniqueness of a solution to this problem obtained by a transposition argument (see \cite{v}, see also \cite{mv,dr3}). 
Let $A$ be a matrix with Lipschitz continuous coefficients satisfying assumption \eqref{lipA} and consider the problem   
\begin{equation}
\begin{cases}
\displaystyle -\operatorname{div} (A(x)\nabla u)= \mu &  \text{in}\, \Omega, \\
u=0 & \text{on}\ \partial \Omega,
\end{cases}\label{pbVeron}
\end{equation}
where $\mu$ belongs to $\mathcal{M}(\Omega,\delta)$.  With the symbol $L^{*}$ we will indicate the transposed operator defined by
$$
L^{*}\varphi :=- {\rm div} (A^{*}(x)\nabla \varphi)\ \ \text{in}\ \ \mathcal{D}'(\Omega)\,.
$$
\begin{defin}\label{verondefin}
	A \emph{very weak solution} of problem \eqref{pbVeron} is a function $u\in L^1(\Omega)$ such that the following holds
	
	\begin{equation*} \displaystyle \int_{\Omega}u L^{*}\varphi  = \int_{\Omega} \varphi d\mu,\label{veryweakdef1}\end{equation*}
	
	\noindent for every  $\varphi \in C^1_0(\overline{\Omega})$ such that $L^{*}\varphi\in L^\infty(\Omega)$. 
	\label{veryweakdef}
\end{defin}

We have the following two  results whose proofs can be found respectively in \cite[Corollary 2.8]{v} and   \cite[Theorem 2.9]{v}. 

\begin{lemma}\label{veroncomplocale}
	Let $f\in L^1_{loc}(\Omega)$ and let $u\in L^{1}_{loc}(\Omega)$ such that 
	\begin{equation*}
		\displaystyle  \int_{\Omega}u L^{*} \varphi = \int_{\Omega} f\varphi,
	\end{equation*}
	\noindent for every  $\varphi \in C^1_c ({\Omega}\bk)$ such that $L^{*} \varphi\in L^\infty(\Omega)$. Then for every open subsets $G \subset\subset  G' \subset  \subset \Omega$,  and for every $1\leq q< \frac{N}{N-1}$,  we have 
	\begin{equation*}
		||u||_{W^{1,q}(G)} \le C \left( ||f||_{L^1(G')}+||u||_{L^1(G')}\right)\,.
	\end{equation*}	\bk
\end{lemma}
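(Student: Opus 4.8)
The plan is to prove the estimate by a Stampacchia-type duality argument localized through a cutoff. Fix once and for all a function $\eta\in C^\infty_c(\Omega)$ with $0\le\eta\le 1$, $\eta\equiv 1$ on $G$ and $\operatorname{supp}\eta\subset G'$, and set $q'=q/(q-1)>N$ (the inequality being equivalent to $q<N/(N-1)$). Since $u\in L^1_{loc}(\Omega)$ its distributional gradient is a well defined distribution on $G$, and by the duality $(L^{q'})^{*}=L^{q}$ it suffices to show that
\[
\Big|\int_{\Omega} u\,\operatorname{div}\Phi\Big|\le C\,\|\Phi\|_{L^{q'}(G)}\big(\|f\|_{L^1(G')}+\|u\|_{L^1(G')}\big)
\]
for every $\Phi\in C^\infty_c(G)^N$; this yields at once $\nabla u\in L^q(G)$ together with the desired bound on $\|\nabla u\|_{L^q(G)}$.

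To produce an admissible test function I would first solve the adjoint problem $L^{*}w=-\operatorname{div}\Phi$ on a smooth intermediate domain $\Omega'$ with $\operatorname{supp}\eta\subset\subset\Omega'\subset\subset\Omega$ and $w=0$ on $\partial\Omega'$. Because $\Phi\in L^{q'}$ with $q'>N$ and the coefficients of $A$ are Lipschitz, the $W^{1,q'}$ theory for divergence-form operators together with the embedding $W^{1,q'}\hookrightarrow C^{0,\alpha}(\overline{\Omega'})$ gives $\|w\|_{L^\infty(\Omega')}\le C\|\Phi\|_{L^{q'}}$, while interior $W^{2,q'}_{loc}$-regularity (hence $C^{1}_{loc}$-regularity) of $w$, valid on $\operatorname{supp}\eta\subset\subset\Omega'$, gives $\|\nabla w\|_{L^\infty(\operatorname{supp}\nabla\eta)}\le C\|\Phi\|_{L^{q'}}$. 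I then take $\varphi:=\eta\,w$, which belongs to $C^1_c(\Omega)$; the direct computation
\[
L^{*}(\eta w)=\eta\,L^{*}w-\big(\nabla\eta\cdot A^{*}\nabla w+\nabla w\cdot A^{*}\nabla\eta\big)-w\,\operatorname{div}(A^{*}\nabla\eta)
\]
shows, using that $A$ is Lipschitz and the bounds above, that $L^{*}\varphi\in L^\infty(\Omega)$, so that $\varphi$ is admissible in the very weak formulation.

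I would then insert $\varphi=\eta w$ into $\int_\Omega uL^{*}\varphi=\int_\Omega f\varphi$. Since $\operatorname{supp}(\operatorname{div}\Phi)\subset\operatorname{supp}\Phi\subset G$ where $\eta\equiv 1$, the leading term satisfies $\eta\,L^{*}w=-\eta\,\operatorname{div}\Phi=-\operatorname{div}\Phi$, so the identity rearranges into
\[
-\int_{\Omega}u\,\operatorname{div}\Phi=\int_{\Omega}f\,\eta w+\int_{\Omega}u\,\Big(\nabla\eta\cdot A^{*}\nabla w+\nabla w\cdot A^{*}\nabla\eta+w\,\operatorname{div}(A^{*}\nabla\eta)\Big).
\]
The first term is bounded by $\|w\|_{L^\infty}\|f\|_{L^1(G')}\le C\|\Phi\|_{L^{q'}}\|f\|_{L^1(G')}$, while the remaining (commutator) terms are supported in $\operatorname{supp}\nabla\eta\subset G'$ and bounded there, so their contribution is at most $C\|\Phi\|_{L^{q'}}\|u\|_{L^1(G')}$. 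This is exactly the claimed duality bound, whence $\nabla u\in L^q(G)$ with the stated estimate. The bound on $\|u\|_{L^q(G)}$ then follows either from a parallel duality argument (testing with the solution of $L^{*}w=\psi$, $\psi\in C^\infty_c(G)$, and using $\|w\|_{L^\infty}\le C\|\psi\|_{L^p}$ with $p>N/2$) or, more directly, by combining the interior gradient bound with $u\in L^1$ through a Sobolev--Poincar\'e inequality on a slightly smaller subdomain.

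The main obstacle is precisely the construction of the admissible test function: the very weak formulation only accepts $\varphi\in C^1_c(\Omega)$ with $L^{*}\varphi\in L^\infty$, so one cannot test directly with the global solution of the adjoint equation. The cutoff device circumvents this but introduces the commutator terms above, and the whole argument hinges on controlling $\|w\|_{L^\infty}$ and $\|\nabla w\|_{L^\infty}$ on $\operatorname{supp}\nabla\eta$ \emph{linearly} in $\|\Phi\|_{L^{q'}}$ --- which is where the Lipschitz regularity of $A$, the smoothness of $\partial\Omega$, and the condition $q'>N$ enter in an essential way.
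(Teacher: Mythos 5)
The paper itself offers no proof of this lemma: it is quoted directly from V\'eron's work (\cite[Corollary 2.8]{v}), so there is no internal argument to compare against, and your proposal must be judged on its own merits. Its architecture (localization by a cutoff, duality against $\Phi\in C^\infty_c(G)^N$, solving the adjoint problem $L^{*}w=-\operatorname{div}\Phi$ on an intermediate smooth domain, and estimating the commutator terms) is sound and is in the spirit of the arguments in the literature. However, there is one genuine gap, and it sits exactly where you yourself say the whole argument hinges: the bound $\|\nabla w\|_{L^\infty(\operatorname{supp}\nabla\eta)}\le C\|\Phi\|_{L^{q'}}$. You justify it by interior $W^{2,q'}_{loc}$ regularity, but the $W^{2,q'}$ estimate for $L^{*}w=-\operatorname{div}\Phi$ controls $\|w\|_{W^{2,q'}}$ by $\|\operatorname{div}\Phi\|_{L^{q'}}+\|w\|_{L^{q'}}$, and $\|\operatorname{div}\Phi\|_{L^{q'}}$ is \emph{not} dominated by $\|\Phi\|_{L^{q'}}$. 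So this route cannot produce a bound linear in $\|\Phi\|_{L^{q'}}$, and linearity with a constant uniform over all $\Phi\in C^\infty_c(G)^N$ is indispensable for the duality step to yield $\nabla u\in L^q(G)$.

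The natural repair is to use that $L^{*}w=0$ away from $\operatorname{supp}\Phi$, but here your choice of cutoff bites back: with $\eta\equiv 1$ exactly on $G$, the set $\operatorname{supp}\nabla\eta$ can touch $\partial G$, while $\operatorname{supp}\Phi$ is an arbitrary compact subset of $G$; their mutual distance therefore depends on $\Phi$ and can be arbitrarily small, so the interior estimate for the homogeneous equation does not give a constant uniform in $\Phi$ either. Both problems disappear with a one-line modification: fix an intermediate open set $G_1$ with $G\subset\subset G_1\subset\subset G'$ and take $\eta\equiv 1$ on $G_1$, $\operatorname{supp}\eta\subset G'$. Then $\operatorname{supp}\nabla\eta\subset\overline{G'}\setminus G_1$ lies at a fixed positive distance from $\overline{G}\supset\operatorname{supp}\Phi$, the function $w$ solves $L^{*}w=0$ in the fixed open set $\Omega'\setminus\overline{G}$, and interior estimates for homogeneous equations with Lipschitz coefficients (local $W^{2,p}$ bounds with zero right-hand side, hence $C^{1}$ bounds by Morrey) give $\|\nabla w\|_{L^{\infty}(\operatorname{supp}\nabla\eta)}\le C\|w\|_{L^{\infty}(\Omega')}\le C\|\Phi\|_{L^{q'}(G)}$ with $C$ depending only on the fixed geometry, $N$, $q$ and $A$. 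With this correction (plus the harmless remark that the case $q=1$ follows from the case $q>1$ by H\"older, since duality with $q'=\infty$ is unavailable), the rest of your computation --- the admissibility of $\eta w$ as a test function, the commutator identity, and the final estimates --- is correct.
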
 

\begin{theorem}\label{exi}
	Let $\mu\in\mathcal{M}(\Omega,\delta)$ then there exists a unique very weak solution to problem \eqref{pbVeron}.
\end{theorem}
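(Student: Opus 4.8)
The plan is to prove both existence and uniqueness by the \emph{transposition} (duality) method, the only structural ingredients being $L^p$ elliptic regularity and the maximum principle for the adjoint operator $L^{*}$, together with a Hopf-type boundary estimate that makes the weight $\delta$ appear. The starting observation is that, since $A$ has Lipschitz coefficients and $\partial\Omega$ is smooth, $L^{*}=-\operatorname{div}(A^{*}\nabla\cdot)$ is itself strictly elliptic, so for every $g\in L^{\infty}(\Omega)$ the adjoint Dirichlet problem $L^{*}\varphi_g=g$ in $\Omega$, $\varphi_g=0$ on $\partial\Omega$, has a unique solution $\varphi_g\in W^{2,p}(\Omega)\cap W^{1,p}_{0}(\Omega)$ for every $p<\infty$, with $\|\varphi_g\|_{W^{2,p}}\le C\|g\|_{L^{p}}\le C\|g\|_{L^{\infty}}$. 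Taking $p>N$ and using $W^{2,p}\hookrightarrow C^{1}(\overline{\Omega})$ gives $\varphi_g\in C^1_0(\overline{\Omega})$ with $\|\varphi_g\|_{C^1(\overline{\Omega})}\le C\|g\|_{L^{\infty}}$; since $\varphi_g$ vanishes on the smooth boundary,
$$|\varphi_g(x)|\le \|\nabla\varphi_g\|_{L^{\infty}}\,\delta(x)\le C\|g\|_{L^{\infty}}\,\delta(x),\qquad\text{i.e.}\qquad \left\|\frac{\varphi_g}{\delta}\right\|_{L^{\infty}}\le C\|g\|_{L^{\infty}}.$$
This is precisely the estimate that makes $\mathcal{M}(\Omega,\delta)$ (rather than $\mathcal{M}(\Omega)$) the natural class: each $\varphi_g$ is an admissible test function in Definition \ref{verondefin}, and $\big|\int_{\Omega}\varphi_g\,d\mu\big|\le \|\varphi_g/\delta\|_{L^{\infty}}\int_{\Omega}\delta\,d|\mu|\le C\|g\|_{L^{\infty}}\|\mu\|_{\mathcal{M}(\Omega,\delta)}$.

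\emph{Uniqueness} is then immediate. If $u_1,u_2$ are two very weak solutions and $v=u_1-u_2$, then $\int_{\Omega}v\,L^{*}\varphi=0$ for every admissible $\varphi$. Given an arbitrary $g\in L^{\infty}(\Omega)$, the function $\varphi_g$ above is admissible and $L^{*}\varphi_g=g$, whence $\int_{\Omega}v\,g=0$; as $g$ is arbitrary, $v=0$ a.e.

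\emph{Existence} is obtained by representing the natural transposition functional. Define $F\colon L^{\infty}(\Omega)\to\re$ by $F(g):=\int_{\Omega}\varphi_g\,d\mu$; by the displayed estimate $F$ is linear and $\|\cdot\|_{L^{\infty}}$-continuous. Splitting $\mu=\mu^{+}-\mu^{-}$ and using the weak maximum principle for $L^{*}$ (which has no zeroth order term), $g\ge0$ forces $\varphi_g\ge0$, so $F^{\pm}(g):=\int_{\Omega}\varphi_g\,d\mu^{\pm}$ are positive functionals. The key point is that each $F^{\pm}$ is \emph{order continuous}: if $0\le g_k\downarrow 0$ a.e., then $g_k\to0$ in $L^{p}$ for all $p<\infty$ by dominated convergence, hence $\varphi_{g_k}\to0$ in $C^1_0(\overline{\Omega})$ by the regularity estimate, so $\|\varphi_{g_k}/\delta\|_{L^{\infty}}\to0$ and $0\le F^{\pm}(g_k)\le\|\varphi_{g_k}/\delta\|_{L^{\infty}}\int_{\Omega}\delta\,d\mu^{\pm}\to0$. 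Consequently the finitely additive set function $E\mapsto F^{\pm}(\chi_E)$ is countably additive, vanishes on Lebesgue-null sets, and is finite (its value at $\Omega$ is $\int_{\Omega}\varphi_1\,d\mu^{\pm}<\infty$); Radon–Nikodym yields $u^{\pm}\in L^{1}(\Omega)$ with $F^{\pm}(g)=\int_{\Omega}u^{\pm}g$ for simple $g$, and by $\|\cdot\|_{L^{\infty}}$-density and continuity for all $g\in L^{\infty}$. Setting $u:=u^{+}-u^{-}\in L^{1}(\Omega)$ gives $\int_{\Omega}u\,g=F(g)=\int_{\Omega}\varphi_g\,d\mu$ for every $g\in L^{\infty}$. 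Finally, for any admissible $\varphi$ one has $g:=L^{*}\varphi\in L^{\infty}$ and, by uniqueness for the adjoint problem, $\varphi_g=\varphi$; thus $\int_{\Omega}u\,L^{*}\varphi=\int_{\Omega}\varphi\,d\mu$, which is exactly the required identity.

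\emph{Main obstacle.} The genuinely delicate step is passing from the bound $|F(g)|\le C\|g\|_{L^{\infty}}$, which only places $F$ in $(L^{\infty})^{*}$ — a space strictly larger than $L^{1}$ — to an honest $L^{1}$ representation of $u$. This is exactly where order continuity is indispensable, and it is powered by the Hopf-type weighted estimate $\|\varphi_g/\delta\|_{L^{\infty}}\le C\|g\|_{L^{\infty}}$ together with the convergence $\varphi_{g_k}\to0$; without the weight $\delta$ (i.e. for a general measure in $\mathcal{M}(\Omega)$) the pairing $\int_{\Omega}\varphi_{g_k}\,d\mu$ need not vanish and $u$ need not be integrable up to the boundary. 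As a final remark, one may apply Lemma \ref{veroncomplocale} to the solution $u$ just constructed to obtain the additional interior regularity $u\in W^{1,q}_{loc}(\Omega)$ for $q<\frac{N}{N-1}$, although this is not needed for the statement itself.
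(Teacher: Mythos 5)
Your proof is correct, but it takes a genuinely different route from the one the paper relies on. The paper does not prove Theorem \ref{exi} itself: it cites \cite[Theorem 2.9]{v}, where existence is obtained constructively by an approximation scheme --- smooth data $f_n$ bounded in $L^1(\Omega,\delta)$ converging to $\mu$ in the weighted narrow sense \eqref{narrow}, uniform bounds $\|u_n\|_{L^1(\Omega)}\le C$ on the classical solutions (coming from the same duality estimate $|\varphi_g|\le C\delta\|g\|_{L^\infty}$ that you prove), and passage to the a.e.\ limit. Your argument replaces the approximation-and-compactness step by a purely functional-analytic one: you represent the transposition functional $F(g)=\int_\Omega\varphi_g\,d\mu$ by an $L^1$ function via positivity, order continuity and Radon--Nikodym, which is exactly the right device to bridge the gap between $(L^\infty)^*$ and $L^1$; your uniqueness argument is the standard duality one, identical in substance to the cited source. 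What each approach buys: yours is self-contained, avoids approximating the measure altogether, and isolates the weighted Hopf-type bound $\|\varphi_g/\delta\|_{L^\infty}\le C\|g\|_{L^\infty}$ as the single analytic ingredient; the constructive route, on the other hand, produces the approximating sequence $u_n$ with $\|u_n\|_{L^1(\Omega)}\le C$ and $u_n\to u$ a.e., which the paper reuses crucially afterwards --- the proofs of Lemma \ref{TroncataH1loc} and Lemma \ref{Katoinequalitylocal} start precisely from that scheme --- so in the economy of the paper the approximation is load-bearing rather than a mere proof device, and your construction would not by itself supply those later arguments. Two small points to polish: your notation $\varphi_1$ (solution with datum $g\equiv 1$) clashes with the paper's first eigenfunction; and in the identification $\varphi=\varphi_g$ for an admissible test function you should say explicitly that $C^1_0(\overline{\Omega})\subset H^1_0(\Omega)$ on a smooth bounded domain, so that energy uniqueness for the adjoint problem applies to the difference. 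Both are trivial to fix.
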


\medskip

In order to prove uniqueness for the singular problem \eqref{1} one also needs a  Kato type inequality for variable coefficients operators  and diffuse measures as data, which is inspired by  \cite{bp} (see also \cite{kl} for a related result in the context of Dirichlet forms).  First we need the following:

\begin{lemma}\label{TroncataH1loc}
	Let $\mu\in\mathcal{M}(\Omega,\delta)$ and let $u$ be the  very weak solution to problem \eqref{pbVeron}, then 
	$T_k(u)\in H^1_{loc}(\Omega)$. Moreover,  for any $E\subset\subset \Omega$, one has 
	$$
	\|T_k(u)\|_{H^{1}(E)}\leq C, 
	$$
 where $C$ is a constant that only depends on $\alpha, E, \Omega$ and $k$. 
\end{lemma}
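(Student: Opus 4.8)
The plan is to obtain the local $H^1$ estimate by testing the equation for $u$ against a truncation involving $T_k(u)$ times a cutoff function, exploiting the ellipticity of $A$ to control the gradient of $T_k(u)$. The very weak formulation in Definition \ref{verondefin} is stated only for test functions $\varphi\in C^1_0(\overline{\Omega})$ with $L^*\varphi\in L^\infty$, so the first issue is that $T_k(u)$ itself is not admissible as a test function. I would circumvent this by working with a smooth approximation: let $\nu_n$ be the sequence of smooth functions approximating $\mu$ (available since, after localizing, the relevant part of the measure can be treated via the decomposition and approximation recalled around \eqref{narrowvera}), and let $u_n$ solve $Lu_n=\nu_n$ with $u_n=0$ on $\partial\Omega$. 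By the uniqueness in Theorem \ref{exi} and the stability of very weak solutions, $u_n\to u$ in $L^1(\Omega)$, and by Lemma \ref{veroncomplocale} the $u_n$ are bounded in $W^{1,q}_{loc}(\Omega)$ for $q<\frac{N}{N-1}$, so we may pass to the limit at the end.

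For the approximating problems $u_n$ is smooth enough that $T_k(u_n)\varphi^2$ is a legitimate test function, where $\varphi\in C^\infty_c(\Omega)$ is a cutoff with $\varphi\equiv 1$ on $E$, $0\le\varphi\le 1$, supported in some $E'\subset\subset\Omega$. Testing gives
\begin{equation}
\int_{\Omega}A(x)\nabla u_n\cdot\nabla\big(T_k(u_n)\varphi^2\big)=\int_{\Omega}T_k(u_n)\varphi^2\,\nu_n\,.
\label{test}
\end{equation}
Expanding the left side, the principal term is $\int_\Omega A\nabla T_k(u_n)\cdot\nabla T_k(u_n)\,\varphi^2\ge \alpha\int_\Omega|\nabla T_k(u_n)|^2\varphi^2$ by \eqref{lipA} and the fact that $\nabla T_k(u_n)=\nabla u_n$ on $\{|u_n|<k\}$. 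The cross term $2\int_\Omega A\nabla u_n\cdot\nabla\varphi\, T_k(u_n)\varphi$ is the error to absorb: I would bound it using $|T_k(u_n)|\le k$, the boundedness $|A|\le\beta$, and Young's inequality to split off an $\frac{\alpha}{2}\int|\nabla T_k(u_n)|^2\varphi^2$ piece against a remainder controlled by $Ck^2\int_\Omega|\nabla\varphi|^2$, which is a fixed constant depending on $E,\Omega$. The right side of \eqref{test} is bounded by $k\int_\Omega\varphi^2\,d\nu_n$, hence by $k\cdot\sup_n\nu_n(E')$, uniformly in $n$ since the $\nu_n$ converge narrowly. This yields $\int_E|\nabla T_k(u_n)|^2\le C(\alpha,E,\Omega,k)$ uniformly in $n$.

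Combined with the uniform $L^1_{loc}$ bound on $u_n$ (hence on $T_k(u_n)$, which is bounded by $k$), this gives a uniform $H^1(E)$ bound on $T_k(u_n)$. By weak compactness $T_k(u_n)\rightharpoonup w$ in $H^1(E)$ along a subsequence, and since $u_n\to u$ in $L^1$ (so, up to a further subsequence, a.e.) and $T_k$ is continuous, $T_k(u_n)\to T_k(u)$ a.e.; therefore $w=T_k(u)$ and $T_k(u)\in H^1(E)$ with $\|T_k(u)\|_{H^1(E)}\le\liminf_n\|T_k(u_n)\|_{H^1(E)}\le C$ by weak lower semicontinuity of the norm. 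Since $E\subset\subset\Omega$ was arbitrary, $T_k(u)\in H^1_{loc}(\Omega)$.

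The main obstacle I anticipate is the rigorous justification that the solutions of the approximating problems $Lu_n=\nu_n$ converge to the very weak solution $u$ of \eqref{pbVeron} in a mode strong enough (namely $L^1$, with a.e. convergence along a subsequence) to identify the weak limit of $T_k(u_n)$ as $T_k(u)$; this relies on the uniqueness and continuous dependence furnished by Theorem \ref{exi} and on the narrow convergence $\nu_n\to\mu$, and care is needed because $\mu\in\mathcal M(\Omega,\delta)$ only carries a weighted mass bound, so the approximation and the $L^1$ stability must be arranged compatibly with this weighted framework rather than with the full mass.
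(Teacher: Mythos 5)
Your overall strategy (approximate by smooth problems, test with a truncation times a cutoff, conclude by weak lower semicontinuity) is the same as the paper's, and your limiting argument identifying the weak limit of $T_k(u_n)$ with $T_k(u)$ is fine. However, there is a genuine gap at the decisive step, namely the treatment of the cross term. After testing with $T_k(u_n)\varphi^2$, the cross term is
$$
2\int_\Omega T_k(u_n)\,\varphi\, A(x)\nabla u_n\cdot\nabla\varphi\,,
$$
and the gradient appearing here is the \emph{full} gradient $\nabla u_n$, not $\nabla T_k(u_n)$: on the set $\{|u_n|\ge k\}$ one has $T_k(u_n)=\pm k\neq 0$ while $\nabla T_k(u_n)=0$, so the integrand does not vanish there and cannot be rewritten in terms of the truncated gradient. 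Consequently, Young's inequality only yields
$$
2\left|\int_\Omega T_k(u_n)\varphi\,A(x)\nabla u_n\cdot\nabla\varphi\right|\le \frac{\alpha}{2}\int_\Omega \varphi^2|\nabla u_n|^2 + Ck^2\int_\Omega|\nabla\varphi|^2\,,
$$
and the first term on the right cannot be absorbed into $\alpha\int_\Omega\varphi^2|\nabla T_k(u_n)|^2$, nor is it finite uniformly in $n$: with data merely bounded in $L^1(\Omega,\delta)$, the approximating solutions are bounded in $W^{1,q}_{loc}(\Omega)$ only for $q<\frac{N}{N-1}<2$ (Lemma \ref{veroncomplocale}), and the whole point of the lemma is that only the truncations of $u$, not $u$ itself, lie in $H^1_{loc}(\Omega)$. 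So the absorption step fails exactly where the difficulty of the lemma sits.

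The paper's proof avoids ever producing a term containing the untruncated gradient: since $T_k(u_n)\nabla u_n=\nabla\Theta_k(u_n)$ with $\Theta_k(s)=\int_0^s T_k(\sigma)\,d\sigma= sT_k(s)-\tfrac12 T_k(s)^2$, one integrates by parts a second time, moving \emph{all} derivatives onto the smooth cutoff, so that the cross term becomes
$$
\int_\Omega T_k(u_n)\Big(u_n-\tfrac12 T_k(u_n)\Big)L^*\varphi \ \le\ k\int_\Omega |u_n|\,|L^*\varphi|\ \le\ Ck\,,
$$
which uses only the uniform $L^1$ bound on $u_n$ from \cite{v} and the fact that $L^*\varphi\in L^\infty(\Omega)$ (this is where the Lipschitz regularity of $A$ enters). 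Replacing your Young-inequality absorption by this double integration by parts repairs the proof. A secondary, minor point: the approximation you invoke via \eqref{narrowvera} and the decomposition \eqref{bgodec} is available only for diffuse measures, whereas here $\mu\in\mathcal{M}(\Omega,\delta)$ is arbitrary; the paper instead uses the approximation scheme of \cite{v}, i.e. smooth $f_n$ bounded in $L^1(\Omega,\delta)$ converging in the sense of \eqref{narrow}, which is also what furnishes the uniform $L^1$ bound and the a.e. convergence $u_n\to u$ that your limiting step requires.
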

\begin{proof}
	We consider the scheme of approximation used in \cite{v}, that is 
	\begin{equation}
	\begin{cases}
	\displaystyle L u_n= f_n &  \text{in}\, \Omega, \\
	u_n=0 & \text{on}\ \partial \Omega,
	\end{cases}\label{pbnVeron}
	\end{equation}
	where $f_n$ is a sequence of  smooth functions bounded in  $L^1(\Omega,\delta)$ \bk converging to $\mu$ in the following sense 
	\begin{equation}\displaystyle \label{narrow} \lim_{n\to\infty}\int_{\Omega} f_n \phi = \int_{\Omega} \phi d\mu, \ \ \ \forall \phi: \frac{\phi}{\delta} \in C(\overline{\Omega}).\end{equation}
	 In \cite[Theorem 2.9]{v} the author shows that the classical solutions  $u_n$  satisfy
	\begin{equation}\label{lu}
	\|u_{n}\|_{\luo}\leq C\,,
	\end{equation}
	and  that $u_{n}$ converges a.e. towards the solution to problem \eqref{pbVeron}. \bk 
	Let $E\subset\subset\Omega$ and consider a function $\varphi\in C^\infty_c(\Omega)$ such that $0\le\varphi\le 1$ and $\varphi = 1$ on  $E $.  Testing   \eqref{pbnVeron} with $T_{k}(u_{n})\varphi$ we have 
\begin{equation}\label{AppendiceTroncateinH1loc1}\begin{array}{l}
	\dys\alpha\int_{E} |\nabla T_k(u_n)|^2\leq 	\alpha\int_{\Omega} |\nabla T_k(u_n)|^2\varphi \le \int_{\Omega}  A(x)\nabla u_n \cdot \nabla T_k(u_n) \varphi \\ \\ \dys 
		=   \int_{\Omega} T_k(u_n) f_{n} \varphi - \int_{\Omega} T_k(u_n) A(x)\nabla u_n \cdot  \nabla \varphi\,. 
		\end{array}
	\end{equation}
	On one hand, the first term on the right hand side  satisfies
	$$
	\left| \int_{\Omega} T_k(u_n) f_{n} \varphi \right| \leq  k\int_{\{\rm supp\varphi \}}f_{n} \varphi\leq Ck\,. 
	$$
	On the other hand, the second term on the right hand side of \eqref{AppendiceTroncateinH1loc1} gives 
$$\begin{array}{l}
		\dys\int_{\Omega} T_k(u_n) A(x)\nabla u_n\cdot \nabla \varphi =  \int_{\Omega} T_k(u_n) u_n L^{*} \varphi- \int_{\Omega} T_k(u_n) \nabla T_k(u_n)\cdot A^*(x)\nabla \varphi   \\\\ \nonumber
	\dys	= \int_{\Omega} T_k(u_n) u_nL^{*} \varphi - \frac{1}{2}\int_{\Omega}\nabla [T_k(u_n)]^2\cdot   A^*(x)\nabla \varphi  \\ \\ \nonumber
		\dys =  \int_{\Omega} T_k(u_n) u_n L^{*} \varphi - \frac{1}{2}\int_{\Omega}  [T_k(u_n)]^2L^{*} \varphi   \\ \\  \nonumber
	\dys	=  \int_{\Omega} T_k(u_n) (u_n- \frac{1}{2}T_k(u_n))L^{*} \varphi \leq  k \int_{\Omega} |u_n| |L^{*} \varphi|\leq Ck \int_{\Omega} |u_n|.
		\end{array}
$$
	Gathering the previous estimate with \eqref{AppendiceTroncateinH1loc1}, recalling \eqref{lu} and also using the weak lower semicontinuity, we finally obtain 
$$
	\int_{E}|\nabla T_k(u)|^2\leq 	\liminf_{n\to\infty}\int_{E}|\nabla T_k(u_n)|^2 \le   Ck\,.
$$ 
\end{proof}
Note, in particular,  that, if $\mu$ is diffuse, then, by standard capacity properties,   $T_{k}(u)$  (and so $u$) is defined  $|\mu|$-a.e. 
The following is a version of Kato inequality in the case of a diffuse measure as datum. 
\begin{lemma}\label{Katoinequalitylocal}
Let $u$ be the  very weak solution to problem \eqref{pbVeron} where $\mu$ is a  diffuse measure in  ${\mathcal M}(\Omega,\delta)$\bk. Then
\begin{equation*}
	\displaystyle  \int_{\Omega} u^+ L^{*}\varphi \le \int_{\{u\ge 0\}} \varphi \mu,
\end{equation*}	
for any nonnegative $\varphi\in C^1_c(\Omega)$ such that $L^{*}\varphi \in L^\infty(\Omega)$.
\end{lemma}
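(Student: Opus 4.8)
The strategy is to prove the inequality first for the smooth approximations already used in Lemma \ref{TroncataH1loc}, and then to pass to the limit. Since $\mu$ is diffuse, I would take the approximating sequence from \cite{dmop}: smooth functions $f_n = g_n - \operatorname{div} G_n$ with $g_n \rightharpoonup g$ weakly in $L^1(\Omega)$, $G_n \to G$ strongly in $L^2(\Omega)^N$, and $f_n \to \mu$ narrowly, and I let $u_n$ be the classical solutions of \eqref{pbnVeron}. By Lemma \ref{veroncomplocale} the $u_n$ are bounded in $W^{1,q}_{loc}(\Omega)$ for every $q<\frac{N}{N-1}$, and by Lemma \ref{TroncataH1loc} the truncations $T_k(u_n)$ are bounded in $H^1_{loc}(\Omega)$; hence, up to a subsequence, $u_n \to u$ in $L^1_{loc}(\Omega)$ and almost everywhere, and $T_k(u_n) \rightharpoonup T_k(u)$ weakly in $H^1_{loc}(\Omega)$. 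Since $\varphi$ has compact support, everything can be localized to a fixed $K = \operatorname{supp}\varphi \subset\subset \Omega$, where the weight $\delta$ is harmless and $\mu$ is a finite diffuse measure.

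Next I would establish a regularized Kato inequality. Fix the nonnegative $\varphi$ and choose a convex $C^2$ approximation $S_\eta$ of the positive part, taken \emph{one-sided} so that $S_\eta''\ge 0$ is supported in $[-\eta,0]$ and $S_\eta' \to \chi_{\{s\ge 0\}}$ pointwise as $\eta \to 0$; this left shift is precisely what will produce the set $\{u\ge 0\}$ rather than $\{u>0\}$. Testing $Lu_n = f_n$ with $S_\eta'(u_n)\varphi \ge 0$ and integrating by parts gives
\begin{equation*}
\int_{\Omega} S_\eta(u_n) L^{*}\varphi + \int_{\Omega} S_\eta''(u_n)\, A(x)\nabla u_n \cdot \nabla u_n\, \varphi = \int_{\Omega} f_n\, S_\eta'(u_n)\varphi,
\end{equation*}
and discarding the nonnegative second term (here I use ellipticity of $A$ together with $\varphi\ge 0$, $S_\eta''\ge 0$) yields $\int_{\Omega} S_\eta(u_n) L^{*}\varphi \le \int_{\Omega} f_n\, S_\eta'(u_n)\varphi$.

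Then comes the double limit, first $n\to\infty$ with $\eta$ fixed, then $\eta\to 0$. The left-hand side converges to $\int_{\Omega} S_\eta(u) L^{*}\varphi$ because $S_\eta$ is Lipschitz, $u_n\to u$ in $L^1_{loc}$ and $L^{*}\varphi \in L^\infty$ with compact support. On the right I cannot pass to the limit against the $n$-dependent test function directly, so I split $f_n = g_n - \operatorname{div} G_n$ and integrate by parts to get $\int g_n S_\eta'(u_n)\varphi + \int G_n\cdot S_\eta'(u_n)\nabla\varphi + \int G_n\cdot S_\eta''(u_n)\nabla u_n\, \varphi$. The first term passes to the limit since $g_n\rightharpoonup g$ in $L^1$ and $S_\eta'(u_n)\varphi \to S_\eta'(u)\varphi$ a.e. and boundedly; the second since $G_n\to G$ in $L^2$ and $S_\eta'(u_n)\nabla\varphi \to S_\eta'(u)\nabla\varphi$ in $L^2$; the third, the delicate one, because on $\operatorname{supp}S_\eta''$ one has $\nabla u_n = \nabla T_\eta(u_n) \rightharpoonup \nabla T_\eta(u)$ in $L^2_{loc}$, so $S_\eta''(u_n)\nabla u_n\,\varphi \rightharpoonup S_\eta''(u)\nabla u\,\varphi$ in $L^2_{loc}$ and pairs with the strongly convergent $G_n$. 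Recombining the three limits gives $\int g\, S_\eta'(u)\varphi + \int G\cdot \nabla(S_\eta'(u)\varphi) = \langle \mu, S_\eta'(u)\varphi\rangle$, where $S_\eta'(u)\varphi \in H^1_c(\Omega)\cap L^\infty(\Omega)$ is admissible since $S_\eta'(u) = S_\eta'(T_\eta(u))\in H^1_{loc}$. Finally, letting $\eta\to 0$, the left side tends to $\int_{\Omega} u^+ L^{*}\varphi$ by dominated convergence, while for the right side I use that, $\mu$ being diffuse, its pairing with $S_\eta'(u)\varphi$ equals $\int_{\Omega} S_\eta'(\tilde u)\varphi\, d\mu$ in terms of the quasi-continuous representative $\tilde u$ (well defined $\mu$-a.e. by the remark after Lemma \ref{TroncataH1loc}); since $S_\eta'(\tilde u) \to \chi_{\{\tilde u\ge 0\}}$ $\mu$-a.e., dominated convergence produces $\int_{\{u\ge 0\}}\varphi\, \mu$, which is the claim.

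I expect the main obstacle to be the term $\int G_n\cdot S_\eta''(u_n)\nabla u_n\, \varphi$: it is the only place where the gradient of the non-truncated $u_n$ enters, and it forces one to combine strong $L^2$ convergence of $G_n$ with the weak $H^1_{loc}$ convergence of the truncations and the bounded a.e. convergence of $S_\eta''(u_n)$. The second delicate point is the very last identification of the right-hand side with the integral over $\{u\ge 0\}$, which rests on the one-sided choice of $S_\eta$ and on the quasi-continuity/duality theory for diffuse measures.
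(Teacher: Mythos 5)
Your argument follows the paper's proof almost step for step: test the approximate equation with a convex one-sided regularization of the positive part evaluated at $u_n$ times $\varphi$, discard the second-order term by convexity and ellipticity, pass to the limit in $n$ by splitting the datum into a weakly convergent $L^1$ part and the divergence of a strongly convergent $L^2$ field (with Lemma \ref{TroncataH1loc} controlling the only term containing $\nabla u_n$), and finally let the regularization tend to $s^{+}$. All of those steps are carried out correctly, including the delicate term $\int G_n\cdot S_\eta''(u_n)\nabla u_n\,\varphi$ and the final identification of $\int_{\{u\ge 0\}}\varphi\,d\mu$ via quasi-continuous representatives.

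The genuine gap is in the construction of the approximating data. The decomposition $f_n=g_n-\operatorname{div}G_n$ with $g_n\rightharpoonup g$ in $L^1(\Omega)$ and $G_n\to G$ in $L^2(\Omega)^N$ comes from \eqref{bgodec} and \cite{dmop}, which concern diffuse measures of \emph{finite} mass, i.e. elements of $\mathcal{M}(\Omega)$. The lemma, however, is stated for diffuse $\mu\in\mathcal{M}(\Omega,\delta)$, which may have infinite total mass near $\partial\Omega$, and this extra generality is not decorative: in the proof of Theorem \ref{uniquenessmain} the lemma is applied to $(h(v)-h(w))\mu_d$, which is only known, through Lemma \ref{M1respectdelta}, to belong to $\mathcal{M}(\Omega,\delta)$. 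Your one-line localization (``$\mu$ is a finite diffuse measure on a neighbourhood of $\operatorname{supp}\varphi$'') does not close this: restricting $\mu$ to a compact set changes the solution $u$, and the approximating sequence must simultaneously converge to $\mu$ in the weighted sense \eqref{narrow} — so that $u_n$ converges to the very weak solution of the \emph{global} problem — and carry the structural decomposition near $\operatorname{supp}\varphi$. The paper's fix is precisely the ingredient missing from your proposal: apply \eqref{bgodec} to the finite diffuse measure $\delta\mu=g-\operatorname{div}G$, approximate it narrowly by $g_n-\operatorname{div}G_n$, and define the data through $\delta f_n=g_n-\operatorname{div}G_n$. Then $f_n\to\mu$ in the sense \eqref{narrow}, and on any compact neighbourhood of $\operatorname{supp}\varphi$, where the (smoothed) $\delta$ is $C^1$ and bounded away from zero, one has
$$
f_n=\Big[\delta^{-1}g_n+G_n\cdot\nabla\big(\delta^{-1}\big)\Big]-\operatorname{div}\big(\delta^{-1}G_n\big),
$$
which is again of the form you need (a weakly convergent $L^1$ part plus the divergence of a strongly $L^2$-convergent field), so that your limit passage then runs verbatim. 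Without this weighted twist, your proof covers only the case of finite diffuse measures.
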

\begin{proof}
 We  consider again  the approximating scheme defined by   \eqref{pbnVeron} but, to our purposes, we need to specify the structure of the approximating sequence $f_{n}$. As $\delta\mu$ is a diffuse measure in ${\mathcal M}(\Omega)$, by \eqref{bgodec}, we have that $\delta\mu=g-{\rm div \,}G$ where $g\in L^{1}(\Omega)$ and $G\in L^{2}(\Omega)^{N}$,  and it  can be approximated in the narrow topology of measures by $g_{n}-{\rm div\,}G_{n}$,  
where $g_n$ weakly converges in $L^1(\Omega)$ to $g$ and $G_n$ strongly converges in $L^2(\Omega)^N$ to $G$. Recalling  \eqref{narrowvera}, it is not difficult to establish that, if we define  $f_{n}$ through 
$$
\delta f_{n}=g_{n}-{\rm div\,}G_{n}
$$
then $f_{n}$ is an  approximation of  $\mu$ in the sense of \eqref{narrow}.
\bk

Now, let  $\Phi :\mathbb{R} \to \mathbb{R}$ be a $C^2$-convex function with $0\le \Phi' \le 1$ and $\Phi''$ with compact support (also $\Phi(0)=0$)\bk;  letting $\varphi\in C^1_c(\Omega)$ be a nonnegative function such that $L^{*}\varphi \in L^\infty(\Omega)$, we have 
$$\begin{array}{l}
	\displaystyle  \int_{\Omega} \Phi(u_n)L^{*}\varphi =  \int_{\Omega} \nabla u_n \cdot A^*(x)\nabla \varphi \Phi'(u_n) \leq  \int_{\Omega} A(x) \nabla u_n \cdot \nabla \left( \varphi \Phi'(u_n)  \right) \\ \\ \dys - \alpha \int_{\Omega} \Phi''(u_{n})|\nabla u_{n}|^{2} \varphi 
	  \leq \bk  \int_{\Omega} \varphi \Phi'(u_n) f_n\,,
\end{array}$$
where we  used both \eqref{lipA} and  the convexity of $\Phi$.

 Now we  pass to the limit with respect to  $n$ in 
 \begin{equation}
 \label{katolocale1}
 	\displaystyle  \int_{\Omega} \Phi(u_n)L^{*}\varphi  \leq \bk  \int_{\Omega} \varphi \Phi'(u_n) f_n\,.
 \end{equation}
 Thanks to Lemma \ref{veroncomplocale} we have local strong convergence in $L^{1}(\Omega)$ (at least) and then, using also that $\Phi''$ has compact support,  we pass to the limit by dominated convergence theorem on the  left hand side. 
Now observe that $\Phi'(u_n)$  converges to $\Phi'(u) $  in $L^{\infty} (\Omega)$ $\ast$-weak and a.e. so that, due to the structure of $\delta f_{n}$, in order to pass to the limit  in the  right hand side of \eqref{katolocale1} it suffices to check that  $\Phi'(u_n)$ is locally bounded in $H^{1}_{loc}(\Omega)$ (and then using weak compactness). To do that only observe that, as  $\Phi''$ has compact support, one has 
$$\nabla \Phi'(u_n)= \Phi''(u_n) \nabla u_n = \Phi''(u_n)\nabla T_k(u_n),$$
for some level $k$, then we use Lemma  \ref{TroncataH1loc} to conclude.

 We can then deduce that 
\begin{equation}\label{katolocale2}
\displaystyle  \int_{\Omega} \Phi(u)L^{*}\varphi \leq  \int_{\Omega} \varphi \Phi'(u) d\mu\,.
\end{equation}
Hence, we conclude  by taking  a sequence of regular convex functions $\Phi_\vare (t)$ such that  $\Phi_\vare (t) =t$ if $t\geq 0$ and $|\Phi_{\vare}(t)|\leq \vare$ if $t<0$. One can pass to the limit in \eqref{katolocale2} (with $\Phi_{\vare}$ instead of $\Phi$)  finally getting
\begin{gather*}
\displaystyle  \int_{\Omega} u^+L^{*}\varphi \leq  \int_{\{u\ge 0\}} \varphi d\mu\,.
\end{gather*}

\end{proof}

\subsection{Further useful tools}

In Section \ref{finite} we will deal with self-adjoint operators $L$ and we shall use the properties of the first eigenfunction related to $L$ defined as the function  $\varphi_1\in H^{1}_{0}(\Omega)\cap W^{2,p}(\Omega)$ ($1\leq p<\infty$)
such that
\begin{equation*}
\begin{cases}
\displaystyle L \varphi_1= \lambda_1 \varphi_1 &  \text{in}\, \Omega, \\
\varphi_1=0 & \text{on}\ \partial \Omega.
\end{cases}\label{evpb}
\end{equation*}
Moreover, it is possible to prove the following consequence of  Hopf's boundary point lemma (see for instance \cite[Lemma 2]{dr});  there exist two positive constants $c_{1 }$ and $c_{2}$ such that
\begin{equation}
c_{1}\delta(x) \le \varphi_1(x) \le c_{2}\delta(x), \ \ \ \text{for } x \in \Omega.
\label{hopfvarphi2}
\end{equation}

Given a continuous function $h$ satisfying assumptions \eqref{infty} and \eqref{h1},  in order  to use some comparison arguments, we will need to construct  two  nonincreasing auxiliary continuous functions $\underline{h}, \overline{h}:\re^{+}\to\re^{+}$ such that 
\begin{equation}\label{uoh}
\underline{h}(s)\leq h(s)\leq  \overline{h}(s)\,\ \ \ \text{for any}\ s>0\,.
\end{equation}
 The construction of $\underline{h}$ is given in \cite{do} in such a way that  it also satisfies $\underline h(s)\leq T_{n}(h(s))$, for any positive $s$ and any $n\in \mathbb{N}$.  The construction of $\overline{h}$ is also easy; for instance one can  pick  $\rho\leq \underline\omega$ such that  
			$$\frac{K_1}{\rho^\gamma} \ge \sup_{s\in [\underline\omega,\infty)}h(s)\,,$$
one can let  
			$$i_0=\frac{K_1}{\rho^\gamma},\ \ \ \ \ \dys i_m= \sup_{s\in [\rho + m - 1,\infty)}h(s), \ \ m\ge 1\,,$$
			and define
			\begin{equation}\label{oh}
			\begin{array}{l}
			\dys \overline{h}(s):=  \frac{K_1}{s^\gamma}\chi_{\{(0,\rho)\}}(s) + \sum_{m\geq1}\Big(2(i_{m}-i_{m-1})\left(s-\rho-m+1\right)+i_{m-1}\Big)\chi_{\{[\rho + m-1,\rho + m- \frac{1}{2})\}}(s)  \\ \\ \dys + i_m\chi_{\{[\rho + m -\frac{1}{2},\rho +m]\}}(s).
			\end{array}\end{equation}			
Observe that, by construction, also $\overline{h}$ satisfies \eqref{h1} with constants $\gamma$,  $K_{1}$, and $\rho$ instead of $\underline\omega$.  
\bk

\section{Finite energy solutions}

\label{finite}
\setcounter{equation}{0}

  Here we analyze the issue of whether    \eqref{1}  admits a solution in $\huz$. The results will depend on   both the regularity of the data and the behavior of the nonlinearity $h$. Although, as we will stress below,  some instances of finite energy solutions can also be considered in the  case of measure data and more general operators,  in this section,  for simplicity,  we restrict our attention  to the case of a nonnegative datum in some $L^{m}(\Omega)$, with $m\geq1$,  and a self-adjoint operator $L$ (i.e. we assume $A(x)$ is symmetric).   Moreover, if $h(\infty)\not= 0$ only truncations belonging  in the energy space are expected (\cite{do}), so that we will  also assume throughout this section  that \eqref{infty} is satisfied with $h(\infty)= 0$. 

Therefore,  we look for solutions in $\huz$ for the following problem
\begin{equation}
\begin{cases}
\displaystyle -{\rm div}(A(x) \nabla u)= h(u)f &  \text{in}\, \Omega, \\
u=0 & \text{on}\ \partial \Omega,
\end{cases}\label{1f}
\end{equation}
where $A(x)$ is a symmetric function with Lipschitz continuous coefficients satisfying \eqref{lipA}.

We start recalling some known instances of  solutions in $H^1_0(\Omega)$ to problem \eqref{1f} that are already present in the literature. Consider, for simplicity,   the model case $h(s)={s^{-\gamma}}$;  as we mentioned, if $A(x)=I$ and  $f$ is an H\"older continuous function on $\overline{\Omega}$ which is bounded away from zero on $\Omega$,  then a classical solution to problem \eqref{1f} is in $\huz$ if and only if $\gamma<3$ (\cite{lm}). Switching to weak solutions with nonnegative data in $L^{m}(\Omega)$, in \cite{bo} the authors proved the existence of  a solution $u$ to \eqref{1f} in  $H^1_0(\Omega)$ if either   $\gamma=1$ and $ f\in L^1(\Omega)$ or   $\gamma<1$ and $ f\in L^{(\frac{2^*}{1-\gamma})'}(\Omega)$.
In the case $\gamma>1$ solutions are always in $H^{1}_{loc}(\Omega)$;  in \cite{am} the authors prove the existence of a solution in $\huz$ if $f\geq C>0$  is a function in $L^{m}(\Omega)$ ($m>1$) and $\gamma<\frac{3m-1}{m+1}$. See also \cite{ls, zc,cc, edr,bgh,op}, and references therein,  for further refinements and extensions. 

\medskip

Following \cite{bo,boca}, if $f\in L^{m}(\Omega)$, a function $u\in \huz$ is a  distributional solution to problem \eqref{1f} if it satisfies  $u\geq c_{\omega}>0$, for any $\omega\subset\subset \Omega$, and  
\begin{equation}\label{distri}
\displaystyle\int_{\Omega}A(x)\nabla u\cdot \nabla \varphi = \int_{\Omega} h(u)f\varphi , \ \ \ \forall \varphi \in C^1_c (\Omega)\,.
\end{equation}
Observe that in this case one has $h(u)f\in L^{1}_{loc}(\Omega)$ and  this notion  is  equivalent to the one given in Definition \ref{def} for general data provided $u\in\huz$. 
A first important  remark is that finite energy distributional  solutions to problem \eqref{1f} are in fact solutions in the usual weak sense, that is they satisfy 
\begin{equation}
		\displaystyle\int_{\Omega}A(x)\nabla u\cdot \nabla \phi = \int_{\Omega} h(u)f\phi , \ \ \ \forall \phi \in H^1_0(\Omega).
		\label{H10test}
	\end{equation}	 

Moreover, $H^{1}_{0}$-distributional solutions to our singular problem are unique provided $h$ is nonincreasing: this was already  observed in \cite{boca, cst} in the model case $h(s)={s^{-\gamma}}$ (see also \cite{ls} for some related preliminary remarks).  Indeed, we have the following result whose proof strictly follows the lines of  the one of Theorems 2.2 and 2.4 in \cite{boca} with minor modifications. For completeness we shall sketch it  in the Appendix.  
\begin{theorem}
Let $f$ in $\luo$ be a nonnegative function and let  $u\in H^1_0(\Omega)$ be a distributional solution to \eqref{1f}, then $u$ satisfies \eqref{H10test}. Moreover, if  $h$ is nonincreasing then problem \eqref{1f} admits a unique distributional solution in $\huz$.  
	\label{bocat}
\end{theorem}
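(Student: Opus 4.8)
The plan is to split the statement into two independent parts: first I would upgrade a distributional solution in $\huz$ to a genuine weak solution in the sense of \eqref{H10test}, and then I would deduce uniqueness from the monotonicity of $h$.

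For the first part the key observation is that since $u\in\huz$ one has $A(x)\nabla u\in L^2(\Omega)^N$, so that $Lu=-{\rm div}(A(x)\nabla u)\in H^{-1}(\Omega)$. Because \eqref{distri} holds for every $\varphi\in C^1_c(\Omega)\supset C^\infty_c(\Omega)$, the distribution $Lu$ coincides with the nonnegative function $h(u)f\in L^1_{loc}(\Omega)$; in particular $h(u)f$, seen as a distribution, belongs to $H^{-1}(\Omega)$. I would then invoke the Brezis--Browder pairing lemma: if a distribution $T\in H^{-1}(\Omega)$ is represented by a function $g\in L^1_{loc}(\Omega)$ and $v\in\huz$ satisfies $gv\ge \ell$ for some $\ell\in\luo$, then $gv\in\luo$ and $\langle T,v\rangle=\int_\Omega gv$. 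Applying this with $T=h(u)f$, $g=h(u)f\ge 0$ and $v=\phi$ for any nonnegative $\phi\in\huz$ (so that $gv\ge 0\in\luo$) yields $h(u)f\phi\in\luo$ together with $\int_\Omega A(x)\nabla u\cdot\nabla\phi=\langle Lu,\phi\rangle=\int_\Omega h(u)f\phi$. To reach arbitrary test functions I would split $\phi=\phi^+-\phi^-$ with $\phi^\pm\in\huz$ nonnegative, apply the previous step to each, and add, obtaining $h(u)f\phi\in\luo$ for every $\phi\in\huz$ and hence \eqref{H10test}. As an alternative to quoting Brezis--Browder one can approximate a nonnegative $\phi$ by $0\le\phi_n\in C^1_c(\Omega)$ with $\phi_n\to\phi$ in $\huz$: the left-hand side of \eqref{distri} passes to the limit by $L^2$ convergence of the gradients, Fatou gives $h(u)f\phi\in\luo$, and a monotone choice of the $\phi_n$ upgrades the resulting inequality to an equality.

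For uniqueness, let $u_1,u_2\in\huz$ be two distributional solutions; by the first part both satisfy \eqref{H10test}, so subtracting gives $\int_\Omega A(x)\nabla(u_1-u_2)\cdot\nabla\phi=\int_\Omega (h(u_1)-h(u_2))f\phi$ for all $\phi\in\huz$. I would then test with $\phi=(u_1-u_2)^+\in\huz$, which is admissible precisely because \eqref{H10test} now holds in all of $\huz$. The left-hand side equals $\int_\Omega A(x)\nabla(u_1-u_2)^+\cdot\nabla(u_1-u_2)^+\ge \alpha\int_\Omega|\nabla(u_1-u_2)^+|^2$ by \eqref{lipA}, while on $\{u_1>u_2\}$ the monotonicity of $h$ forces $h(u_1)-h(u_2)\le 0$, so that the right-hand side is $\le 0$. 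Hence $\nabla(u_1-u_2)^+=0$ a.e., and the Poincar\'e inequality gives $(u_1-u_2)^+=0$, i.e. $u_1\le u_2$ a.e.; exchanging the roles of $u_1$ and $u_2$ yields $u_1=u_2$.

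The only genuinely delicate point is the first part. Since $h(u)f$ need not belong to $\luo$ globally, but only to $L^1_{loc}(\Omega)$ (as Example \ref{ex} shows), one cannot naively pass from $C^1_c(\Omega)$ to $\huz$ test functions, and the whole content is that the finite energy of $u$ forces $h(u)f\phi$ to be integrable against every $\phi\in\huz$ with the expected identity; this is exactly what the $H^{-1}$ structure of $Lu$ combined with the nonnegativity of $h(u)f$ provides. Once \eqref{H10test} is secured, the uniqueness argument is completely standard and relies only on ellipticity and on the monotonicity of $h$.
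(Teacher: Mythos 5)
Your proof is correct, and for the main step it takes a genuinely different route from the paper. To pass from \eqref{distri} to \eqref{H10test}, the paper follows Boccardo--Casado-D\'iaz: it approximates $\phi\in\huz$ by $\psi_n\in C^1_c(\Omega)$ and tests \eqref{distri} with $\varphi=\sqrt{\vare^2+|\psi_n-\psi_k|^2}-\vare$, which gives the estimate $\into |h(u)f\psi_n-h(u)f\psi_k|\le \beta\|u\|_{\huz}\|\psi_n-\psi_k\|_{\huz}$; hence $h(u)f\psi_n$ is Cauchy in $\luo$ and one passes to the limit in \eqref{distri}. You instead identify the distribution $Lu\in H^{-1}(\Omega)$ with the function $h(u)f\in L^1_{loc}(\Omega)$ and invoke the Brezis--Browder pairing lemma for nonnegative $\phi\in\huz$, then split $\phi=\phi^+-\phi^-$; this is legitimate and shorter, delegating the analytic work to a standard external result (the paper's Cauchy estimate is, in essence, a self-contained proof of that pairing property in this special case). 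What the paper's hands-on argument buys is robustness: as the authors remark after the appendix proof, it uses only ellipticity and therefore extends directly to Leray--Lions type nonlinear operators, where the $H^{-1}$ duality framework you rely on is less immediate. One caveat: your alternative sketch via Fatou plus ``a monotone choice of the $\phi_n$'' is the only fragile point, since approximating a nonnegative $\huz$ function in norm by a monotone sequence of nonnegative compactly supported functions is itself a nontrivial technical fact; your main (Brezis--Browder) route does not need it. The uniqueness step coincides with the paper's argument up to a sign convention: you test with $(u_1-u_2)^+$ and conclude $u_1\le u_2$, the paper tests with $(u_1-u_2)^-$ and concludes $u_1\ge u_2$; both hinge on the same combination of \eqref{lipA} and the monotonicity of $h$, made admissible by the first part.
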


In the rest of this section we then answer  the question of  whether  problem  \eqref{1f}  admits a  (unique, provided $h$ is not increasing) finite energy solution.  As we already said the existence results for problem \eqref{1f} are essentially based on an approximation scheme. Thus,  we consider  solutions $u_{n}\in \huz\cap L^{\infty}(\Omega)$ of 
\begin{equation}\label{1nn}\begin{cases}
-{\rm div} (A(x)\nabla  u_n)= h_n(u_n)f_n & \text{in}\;\ \Omega,\\
u_n=0 & \text{on}\;\ \partial\Omega,
\end{cases}\end{equation}
where $h_n:= T_n(h)$, $f_n:= T_n(f)$. One  has  (\cite{do}) that a positive constant $c_\omega$ exists such that 
\begin{equation}\label{sotto}
u_{n}\geq c_{\omega}>0, \  \ \forall  \ \omega\subset\subset\Omega\,.
\end{equation}
Our  aim will then consist in look for estimates  on  the sequence $u_{n}$ in $\huz$. 
 In order to simplify the exposition observe that it is not restrictive to assume that  $$n>\max(h(\underline{\omega}), \max_{[\underline \omega, \infty)}h(s))$$ so that we are only possibly truncating $h$ near $s=0$. \bk 

 As we will see, a major role in  the regularizing effect for the solutions is played by  the behavior of $h$ at infinity. Therefore,  in order to present the results, we will also need to assume the following:   
 \begin{equation}\label{h2}
\exists\;\ K_2,\overline{\omega}>0\ \;\text{such that}\ \;h(s) \le \frac{K_2}{s^\theta}\;\ \text{if}\ \;s>\overline{\omega},
\end{equation}
for some $\theta>0$.

\medskip 
We distinguish between the cases $\gamma\leq 1$ and $\gamma>1$ as, in the former case the presence of a possibly singular $h$ is essentially negligible, while in the latter case  a control near zero will be  needed. 
\bk

\subsection{The case $\gamma\leq 1$} 
Assuming a \emph{strong control} on $h$ at infinity then solutions to \eqref{1f} have always finite energy for any integrable data. 
\begin{theorem}\label{magg1}
	Let  $h$  satisfy \eqref{h1} and \eqref{h2}, with $\gamma\leq 1$ and $\theta\geq 1$. Then for any nonnegative $f\in\luo$ there exists a solution $u\in H^1_0(\Omega)$ to problem \eqref{1f}. 
\end{theorem}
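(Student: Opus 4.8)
The plan is to run the approximation scheme \eqref{1nn}, to establish a uniform energy bound, and then to pass to the limit; the whole argument hinges on one elementary observation about the product $h(s)s$. Let $u_n\in\huz\cap L^\infty(\Omega)$ solve \eqref{1nn}, and recall from \eqref{sotto} that $u_n\ge c_\omega>0$ on every $\omega\subset\subset\Omega$. I would first note that, precisely because $\gamma\le 1$ and $\theta\ge 1$, the map $s\mapsto h(s)s$ is bounded on all of $(0,\infty)$ by a constant depending only on $h$. Indeed, splitting $(0,\infty)$ into three regions: for $s<\underline\omega$ assumption \eqref{h1} gives $h(s)s\le K_1 s^{1-\gamma}\le K_1\underline\omega^{1-\gamma}$, since $1-\gamma\ge 0$; on the compact interval $[\underline\omega,\overline\omega]$ continuity of $h$ gives $h(s)s\le \overline\omega\max_{[\underline\omega,\overline\omega]}h$; and for $s>\overline\omega$ assumption \eqref{h2} gives $h(s)s\le K_2 s^{1-\theta}\le K_2\overline\omega^{1-\theta}$, since $1-\theta\le 0$. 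Hence $h(s)s\le C$ for every $s>0$.

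With this in hand, I would test \eqref{1nn} with $u_n$ itself (admissible since $u_n\in\huz\cap L^\infty(\Omega)$), use the ellipticity \eqref{lipA}, and estimate by means of $h_n=T_n(h)\le h$ and $f_n=T_n(f)\le f$:
$$\alpha\into|\nabla u_n|^2\le \into h_n(u_n)u_n f_n\le \into h(u_n)u_n f\le C\into f,$$
which shows that $(u_n)$ is bounded in $\huz$ independently of $n$. By weak compactness I would extract a subsequence with $u_n\rightharpoonup u$ in $\huz$ and $u_n\to u$ a.e.; then $u\in\huz$, so the boundary condition \eqref{bd} holds automatically, and passing to the limit a.e. in \eqref{sotto} preserves the local lower bound $u\ge c_\omega>0$ on every $\omega\subset\subset\Omega$.

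It then remains to pass to the limit in the distributional formulation. Fix $\varphi\in C^1_c(\Omega)$ with support $\omega\subset\subset\Omega$. The left-hand side $\into A(x)\nabla u_n\cdot\nabla\varphi$ converges to $\into A(x)\nabla u\cdot\nabla\varphi$ by the weak $L^2$-convergence of the gradients. On $\omega$ one has $u_n\ge c_\omega$, and since $h$ is continuous with a finite limit $h(\infty)$ by \eqref{infty}, it is bounded on $[c_\omega,\infty)$; therefore $|h_n(u_n)f_n\varphi|\le C_\omega\|\varphi\|_\infty f\in\luo$ while $h_n(u_n)\to h(u)$ a.e. on $\omega$, so dominated convergence yields $\into h_n(u_n)f_n\varphi\to\into h(u)f\varphi$. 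Thus $u$ satisfies \eqref{distri}, i.e. it is a finite energy distributional solution to \eqref{1f} (which, by Theorem \ref{bocat}, is also a weak solution in the sense of \eqref{H10test}).

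The main obstacle — and really the only nontrivial point — is the uniform bound $h(s)s\le C$: this is exactly where the two exponent restrictions enter, $\gamma\le 1$ taming the singularity at the origin and $\theta\ge 1$ forcing a sufficiently fast decay at infinity, and it is what makes the energy estimate close with the bare test function $u_n$. Everything afterwards is routine, the local positivity \eqref{sotto} being the ingredient that keeps $h(u_n)$ away from its singularity on the support of each test function so that the right-hand side passes to the limit.
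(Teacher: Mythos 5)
Your proof is correct and follows essentially the same route as the paper: the paper's argument is exactly your three-region estimate (stated there by splitting the integral over $\{u_n<\underline{\omega}\}$, $\{\underline{\omega}\le u_n\le\overline{\omega}\}$, $\{u_n>\overline{\omega}\}$ rather than as the pointwise bound $h(s)s\le C$), followed by testing \eqref{1nn} with $u_n$. The limit passage you spell out is the standard one the paper leaves implicit by appealing to the approximation scheme of \cite{do}, so no discrepancy arises.
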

\begin{proof}
	\noindent As we said we only  need some a priori estimates on the sequence of approximating solutions $u_n$ to \eqref{1nn} in $H^1_0(\Omega)$.  To this aim, we take $u_n$ as a test function in \eqref{1nn} and we use \eqref{h1}, and \eqref{h2} obtaining 
$$\begin{array}{l}
	\dys \alpha \int_{\Omega} |\nabla u_n|^2 \leq \int_{\Omega} h_n(u_n)f_n u_n \leq K_{1}\int_{\{u_n<\underline{\omega}\}}f_n{u_n^{1-\gamma}}+ \max_{[\underline \omega,\overline \omega]}h(s)\int_{\{\underline{\omega}\leq u_n \leq\overline{\omega}\}}f_n u_n \notag \\ \\
\dys 	+\ K_{2}\int_{\{u_n>\overline{\omega}\}}{f_{n}}{ u_n^{1-\theta}} \le K_{1}\underline{\omega}^{1-\gamma}\int_{\{u_n<\underline{\omega}\}}f + \overline{\omega} \max_{[\underline \omega,\overline \omega]}h(s)\int_{\{\underline{\omega}\leq u_n \leq\overline{\omega}\}}f + {K_{2}}{ \overline{\omega}^{(1-\theta)}}\int_{\{u_n>\overline{\omega}\}}f \le C.
\end{array}
$$
and the proof is complete.
\end{proof}

Observe that the previous proof only made use of the ellipticity condition \eqref{lipA}, on \eqref{h1}, and on \eqref{h2};  therefore, it easily extends to   more general, possibly nonlinear, operators in not necessarily smooth domains, and to the case of measure data  as  considered for instance in \cite{dc,do, op,cst}.  \bk 

\medskip 
A \emph{milder control} on $h$ at infinity (namely $\theta<1$) is not enough to ensure, in general, finite energy solutions as the following example shows. 

\begin{example}\label{luigi}
	For $N>2$ we fix $\gamma<1$ and we let $q=\frac{N(\gamma+1)}{N+2\gamma}$.  As $q<(2^{*})'=\frac{2N}{N+2}$, we can consider the positive solution $u$ to 
	\begin{equation*}
	\begin{cases}
	\displaystyle -\Delta u= f &  \text{in}\, \Omega, \\
	u=0 & \text{on}\ \partial \Omega.
	\end{cases}
	\end{equation*}
where $0\leq f\in L^{q}(\Omega)$ is such that  $u$ is $W^{1,q^{*}}_{0}(\Omega)$ but  $u\not\in H^1_0(\Omega)$. 
We have that $u$ is a distributional solution to
	\begin{equation*}
	\begin{cases}
	\displaystyle -\Delta u= \frac{g}{u^\gamma} &  \text{in}\, \Omega, \\
	u=0 & \text{on}\ \partial \Omega,
	\end{cases}
	\end{equation*}
where $g=fu^\gamma$. 	We claim that  $g$ in $L^1(\Omega)$; indeed, by H\"older's inequality one has 
	$$\dys \into fu^\gamma\le\left(\into f^q\right)^\frac{1}{q} \left(\into u^{\gamma q'}\right)^\frac{1}{q'},$$
	and, by the choice of $q$, the last integral is finite  since  $\gamma q' = q^{**}$.	
\end{example}

The previous example shows, at least in a model case, that if $f$ is asked to merely belong to  $L^1(\Omega)$ and  $\theta<1$ (here $\gamma=\theta<1$) then it is possible to find a solution $u$ not belonging to $ H^1_0(\Omega)$.   

Anyway,  also to recover the standard model in which  $\gamma=\theta<1$, the general case  $\theta>0$  can  be treat  by assuming some further requests on $f$, namely more regularity inside $\Omega$  and a control near the boundary  of the type   
	\begin{equation}
	f(x) \le \frac{c}{\delta(x)}\ \  \text{a.e.  in } \Omega_\varepsilon\,,
	\label{condfalto}
	\end{equation}
		where $\vare$ is small enough in order to guarantee that $\Omega\setminus\overline\Omega_\varepsilon$ is a smooth  subset  compactly contained in $\Omega$. 
\begin{theorem}
	Let $ 0\leq f\in L^1(\Omega) \cap L^p(\Omega\setminus\overline\Omega_\varepsilon)$ with  $p > \frac{N}{2}$ satisfying \eqref{condfalto}, and let  $h$ satisfying \eqref{h1} and \eqref{h2} with $\gamma\leq\bk1$ and $\theta>0$. Then there exists a  solution $u$ to \eqref{1f} belonging to $H^1_0(\Omega)$.
	\label{gamma<1}
\end{theorem}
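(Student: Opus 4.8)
The plan is to reduce the whole statement to a uniform a priori bound for the approximating solutions $u_n$ of \eqref{1nn} in $\huz$, and then to pass to the limit. Indeed, once $\|u_n\|_{\huz}\le C$ is available, a subsequence converges weakly in $\huz$, strongly in $L^2(\Omega)$ and a.e. to some $u\in\huz$; since \eqref{sotto} gives $u_n\ge c_\omega>0$ on every $\omega\subset\subset\Omega$, the limit satisfies $u\ge c_\omega$ as well, so that on the (compact) support of any $\varphi\in C^1_c(\Omega)$ the sequence $h(u_n)$ is bounded, converges a.e. to $h(u)$ by continuity of $h$, and $|h(u_n)f_n\varphi|\le C f\in\luo$ locally. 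Dominated convergence on the right-hand side and weak convergence on the left then produce a distributional solution in $\huz$ in the sense of \eqref{distri}, which by Theorem \ref{bocat} is a genuine weak solution. Thus everything rests on the energy estimate.

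To obtain it I would test \eqref{1nn} with $u_n$, so that \eqref{lipA} yields
\[
\alpha\into|\nabla u_n|^2\le\into h_n(u_n)f_nu_n,
\]
and split the right-hand side over $\{u_n<\underline\omega\}$, $\{\underline\omega\le u_n\le\overline\omega\}$ and $\{u_n>\overline\omega\}$. On the first region \eqref{h1} together with $\gamma\le1$ gives $h(u_n)u_n\le K_1u_n^{1-\gamma}\le K_1\underline\omega^{1-\gamma}$, and on the second $h(u_n)u_n$ is bounded by $\overline\omega\max_{[\underline\omega,\overline\omega]}h$; since $f\in\luo$ on all of $\Omega$, both contributions are bounded by a multiple of $\|f\|_{\luo}$. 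The delicate term is the one over $\{u_n>\overline\omega\}$, where I would invoke \eqref{h2} to write $h(u_n)u_n\le K_2u_n^{1-\theta}$ (the case $\theta\ge1$ being trivial, so I assume $0<\theta<1$).

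For this last term the two assumptions on $f$ enter, after splitting the integral between the interior $\Omega\setminus\overline\Omega_\varepsilon$ and the boundary strip $\Omega_\varepsilon$. In the interior I would use $f\in L^p(\Omega\setminus\overline\Omega_\varepsilon)$ with $p>\frac N2$: by H\"older and the Sobolev inequality,
\[
\int_{\Omega\setminus\overline\Omega_\varepsilon}f\,u_n^{1-\theta}\le\|f\|_{L^p}\,\|u_n\|_{L^{(1-\theta)p'}}^{1-\theta}\le C\Big(\into|\nabla u_n|^2\Big)^{\frac{1-\theta}{2}},
\]
the point being that $p>\frac N2$ forces $p'<2^*$, hence $(1-\theta)p'<2^*$ and the Sobolev embedding applies. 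In the strip I would use the pointwise control \eqref{condfalto}, writing $u_n^{1-\theta}/\delta=(u_n/\delta)^{1-\theta}\delta^{-\theta}$ and applying H\"older with exponent $\frac{2}{1-\theta}$ together with Hardy's inequality $\into(u_n/\delta)^2\le C\into|\nabla u_n|^2$; this bounds the strip contribution by $C(\into|\nabla u_n|^2)^{(1-\theta)/2}$, the integrability $\int_{\Omega_\varepsilon}\delta^{-2\theta/(1+\theta)}<\infty$ holding precisely because $\theta<1$. Collecting the estimates gives $\alpha\into|\nabla u_n|^2\le C+C(\into|\nabla u_n|^2)^{(1-\theta)/2}$ with exponent strictly below one, and Young's inequality absorbs the last term, delivering the uniform bound.

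The hard part is exactly the boundary-strip estimate. Near $\partial\Omega$ the datum is only assumed to behave like $\delta^{-1}$, which carries no pointwise integrability against bounded weights, so one cannot hope to control the term by $\|f\|$ alone; instead the growth of $u_n$ must be traded for a gain through Hardy's inequality, and the H\"older/Hardy balance closes if and only if $\theta<1$, just as the interior part closes if and only if $p>\frac N2$. This is the precise reason the two structural requirements on $f$ are calibrated as in \eqref{condfalto} and in the $L^p$ hypothesis, and why the milder decay $\theta<1$ at infinity—excluded in Theorem \ref{magg1}—can now be compensated at the price of more regularity on $f$.
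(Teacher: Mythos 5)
Your proof is correct, but it takes a genuinely different route through both key estimates. For the interior term the paper invokes \eqref{sotto} and De Giorgi--Stampacchia regularity to get a uniform $L^\infty$ bound for $u_n$ on $\Omega\setminus\overline\Omega_\varepsilon$ (this is exactly where $p>\frac{N}{2}$ enters), whereas you use H\"older plus the Sobolev embedding, which in fact only needs $(1-\theta)p'\le 2^*$, i.e. roughly $p\ge\frac{2N}{N+2}$, so your argument even weakens the interior hypothesis. For the boundary strip, the paper runs a barrier argument: it introduces the auxiliary problem \eqref{aux} with the monotone majorant $\overline{h}$ of \eqref{oh}, gets $u_n\le w_n$ by comparison, constructs a supersolution $M\varphi_1^{1/(\gamma+1)}$ of \eqref{aux} in $\Omega_\varepsilon$ (using Hopf's lemma, \eqref{hopfvarphi2} and \eqref{condfalto}), and concludes $u_n\le M\varphi_1^{1/(\gamma+1)}$ there, after which the strip integral is bounded by $C\int_{\Omega_\varepsilon}\delta^{-\frac{\theta+\gamma}{1+\gamma}}<\infty$, a constant independent of the energy. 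You instead trade $u_n/\delta$ for the gradient through Hardy's inequality and close by absorption via Young, the point being that the resulting power $\frac{1-\theta}{2}$ of the energy is strictly sublinear; the H\"older splitting $u_n^{1-\theta}\delta^{-1}=(u_n/\delta)^{1-\theta}\delta^{-\theta}$ with exponents $\frac{2}{1-\theta}$, $\frac{2}{1+\theta}$ and the integrability of $\delta^{-2\theta/(1+\theta)}$ for $\theta<1$ are all in order. What your approach buys: it avoids the comparison principle, the construction of $\overline{h}$, Hopf's lemma, and, notably, the symmetry of $A$ (the paper restricts Section \ref{finite} to self-adjoint $L$ precisely to have $\varphi_1$ at hand), so it extends more readily to nonsymmetric or nonlinear operators. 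What the paper's approach buys: a pointwise bound $u_n\le C\delta^{1/(\gamma+1)}$ near $\partial\Omega$, which is information of independent interest (the same barrier technique is reused in the proof of Theorem \ref{teosharp}), and a strip estimate that requires no absorption. Both proofs use $\gamma\le 1$ only on the region $\{u_n<\underline{\omega}\}$ and both reduce the case $\theta\ge 1$ to Theorem \ref{magg1}.
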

\begin{proof}Without loss of generality we assume $\theta<1$ otherwise we can apply Theorem \ref{magg1} to conclude.

First of all, in view of \eqref{sotto} and on the local regularity of $f$, we can apply  classical De Giorgi-Stampacchia regularity theory to get that    $u_{n}\in C(\Omega\setminus\Omega_\varepsilon)$ and 
$$\| u_n\|_{L^{\infty}(\Omega\setminus\overline\Omega_\varepsilon)}\leq C\,, $$
where $C$  depends on $\vare$ but not on $n$. In particular, 
$$\| h_{n}(u_{n})u_n\|_{L^{\infty}(\Omega\setminus\overline\Omega_\varepsilon)}\leq C ||h(s)||_{L^{\infty}([c_{\Omega\setminus\overline\Omega_\varepsilon}, \infty))}\,.$$
   
	Therefore, testing \eqref{1nn} with $u_{n}$ and  using  both \eqref{h1} and \eqref{h2},  we have
	\begin{equation}
	\begin{array}{l}	\label{stimaungammaminore}
	\alpha \displaystyle \int_{\Omega} |\nabla u_n|^2 \leq  \int_{\Omega} h_n(u_n)f_nu_n \le K_{1}\int_{\Omega_\varepsilon\cap \{u_n< \underline{\omega}\}}f_n u_n^{1-\gamma} + \int_{\Omega_\varepsilon\cap \{\underline{\omega} \le u_n \le \overline{\omega} \}}h_n(u_n)f_nu_n  
	\\\\ \dys 
	+ K_{2}\int_{\Omega_\varepsilon\cap \{u_n> \overline{\omega}\}}f_n u_n^{1-\theta} + \int_{\Omega\setminus\overline\Omega_\varepsilon}  h_n(u_n)f_nu_n \le  K_{1}\underline{\omega}^{1-\gamma}\int_{\Omega_\varepsilon\cap \{u_n< \underline{\omega}\}}f
	\\\\ 	\dys 
	+
	\overline{\omega}  \max_{[\underline \omega,\overline \omega]}h(s)\bk \int_{\Omega_\varepsilon\cap \{\underline{\omega} \le u_n \le \overline{\omega} \}}f + K_{2} \int_{\Omega_\varepsilon\cap \{u_n> \underline{\omega}\}}f_nu_n^{1-\theta}  \\\\ \dys  
	+ C ||h(s)||_{L^{\infty}([c_{\Omega\setminus\overline\Omega_\varepsilon}, \infty))} \int_{\Omega\setminus\overline\Omega_\varepsilon}  f\,.\end{array}
	\end{equation}

What is needed to conclude is then the control of the term 
\begin{equation}\label{coming}
	  \int_{\Omega_\varepsilon\cap \{u_n> \underline{\omega}\}}f_nu_n^{1-\theta} \,. 
\end{equation}

\medskip

Consider the smooth solutions $w_{n}$ to  the auxiliary problem
\begin{equation}
		\begin{cases}
		\displaystyle -\operatorname{div}(A(x)\nabla w_n) = \overline{h}_{n}(w_n)f_n &  \text{in}\, \Omega, \\
		w_{n}=0 & \text{on}\ \partial \Omega,
		\label{aux}
		\end{cases}
\end{equation}
where $\overline{h}_{n}(s)$ is the truncation at level $n$ of the function $\overline{h}$ defined in \eqref{oh}. By comparison $u_{n}$ is a sub-solution to problem \eqref{aux} and so $u_{n}\leq w_{n}$ for any $n$. 
 We now look for a super-solution to problem \eqref{aux} in $\Omega_{\vare}$ of the form  $M\varphi_1^t$, for some $M, t>0$,  in order to get,  by comparison (see for instance \cite[Theorem 10.7]{gt} and the discussion at the end of its proof\bk),  that 
	\begin{equation}
	M\varphi_1^t\ge w_{n}\geq  u_n, \ \ \ \text{in } \Omega_\varepsilon.
	\label{sopragamma<1}
	\end{equation}
	
	 We fix $t=\frac{1}{\gamma+1}$. 
	   We need  to check that  the first inequality in 
	\begin{equation}\nonumber
	\begin{array}{l}
\dys 	-{\rm div} (A(x)\nabla M\varphi_1^t)=\overline{h}(M\varphi_1^t)\left(M t(1-t)\frac{\varphi_1^{t-2}}{\overline{h}(M\varphi_1^t)}A(x)\nabla \varphi_1\cdot\nabla \varphi_1  + Mt\lambda_1\frac{\varphi_1^{t}}{\overline{h}(M\varphi_1^t)}\right ) \\ \\ \dys\ge f\overline{h}(M\varphi_1^t)\ge f_n\overline{h}_n(M\varphi_1^t)  
	\label{soprasolgammaminore}
	\end{array}
	\end{equation}
	holds in $\Omega_\varepsilon$. 
Dropping a positive term and recalling both \eqref{hopfvarphi2} and  \eqref{condfalto} the previous is implied by  the first inequality in 
	\begin{equation*}
	\alpha M t(1-t)\frac{\varphi_1^{t-2}}{\overline{h}(M\varphi_1^t)}|\nabla \varphi_1|^{2} \ge \frac{c_2 c}{\varphi_1} \ge \frac{c}{\delta} \ge f\ \text{ in } \Omega_\varepsilon,
	\label{gammaminore4}
	\end{equation*}
that, in view of the Hopf's lemma,  essentially reduces (up to normalization of not relevant constants) in proving that there exists a positive constant $M$ such that 
\begin{equation}\label{maxo}
\overline{h}(M\varphi_1^{t})(M\varphi_1^t)^\gamma\leq M^{1+\gamma} \ \ \ \text{a.e. in}\ \   \Omega_\varepsilon.
\end{equation}
We have
$$ 
\overline{h}(M\varphi_1^{t})(M\varphi_1^t)^\gamma\leq\max (K_1,\max_{[\rho,\infty)}\overline{h}(s) (M\varphi_1^t)^\gamma)\,,
$$
so that  \eqref{maxo} is satisfied up the following  choice 
$$
 M\geq \max(K_1^{t},\max_{[\rho,\infty)}\overline{h}(s) \|\varphi_1^t\|_{L^\infty(\Omega_\vare)}^{\gamma})\,.
$$	
  Observe that, by standard regularity theory,   $\varphi_1^t$  is continuous up to the boundary and smooth inside $\Omega$. \bk By possibly increase the value of $M$  we also assume
	\begin{equation*}
	M\varphi_1^t\ge w_{n}\geq u_n \ \  \text{  in  } \partial (\Omega\setminus\partial\Omega_\varepsilon), 
	\end{equation*}
and we can apply the comparison principle in $\Omega_\vare$  obtaining \eqref{sopragamma<1}. 

Therefore, coming back to \eqref{coming}, using also \eqref{condfalto} and \eqref{hopfvarphi2}, we have 
	\begin{equation*}
	\displaystyle \int_{\Omega_\varepsilon\cap \{u_n> \overline{\omega}\}}f_nu_n^{1-\theta}  \le M^{1-\theta}\int_{\Omega_\varepsilon}\frac{c}{\delta}\varphi_1^{\frac{1-\theta}{1+\gamma}}\le C \int_{\Omega_\varepsilon}{\delta^{-\frac{\theta+\gamma}{1+\gamma}}}<\infty,
	\label{primogammaminore2} 
	\end{equation*}
	 since  $\theta<1$,  and the proof is complete recalling \eqref{stimaungammaminore}.

\end{proof}

\begin{remark}
 Although assumption  \eqref{condfalto} is not too restrictive, it seems to be only technical and it could be removed. On the other hand some stronger local summability inside $\Omega$ seems to be needed (compare also with Example \ref{ex3} later). 
\end{remark}

\subsection{The case $\gamma>1$}
\label{regularity}

Unless for the case of compactly supported data (see the discussion following the proof of Theorem \ref{34} below),  it is known  that boundedness away from zero of the datum $f$ (at least near the boundary)  is very useful in order to obtain sharp regularity result as the one we look for (\cite{lm,am,zc}). For merely nonnegative data we have the following:
\begin{theorem}
	\label{34} Let $ f$ be a nonnegative  function in $L^m(\Omega)$ with $m> 1$, and let $h$ satisfies    \eqref{h1} and \eqref{h2}, with $\theta\geq 1$.  Then there exists a  solution $u$ to \eqref{1f} that belongs to $H^1_0(\Omega)$ provided
	$$\displaystyle 1<\gamma<2 - \frac{1}{m}\,.$$   
\end{theorem}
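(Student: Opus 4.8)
The plan is to reduce everything to a uniform a priori bound for the approximating solutions $u_n$ of \eqref{1nn} in $\huz$: once $\into|\nabla u_n|^2\le C$ is established, weak compactness together with the local lower bound \eqref{sotto} lets us pass to the limit in the distributional formulation \eqref{distri} in the usual way. To get the estimate I would test \eqref{1nn} with $u_n$ itself (admissible since $u_n\in\huz\cap L^\infty(\Omega)$), obtaining $\alpha\into|\nabla u_n|^2\le \into h_n(u_n)f_n u_n$, and then split the right-hand side over $\{u_n\ge\underline{\omega}\}$ and $\{u_n<\underline{\omega}\}$.

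On the set $\{u_n\ge\underline{\omega}\}$ the singularity is irrelevant and the decay at infinity carries the estimate: by \eqref{h2} with $\theta\ge 1$ one has $h(s)s\le K_2 s^{1-\theta}\le K_2\overline{\omega}^{1-\theta}$ for $s>\overline{\omega}$, while $h$ is bounded on $[\underline{\omega},\overline{\omega}]$ by continuity, so that $h_n(u_n)u_n\le C$ there; this contribution is thus controlled by $C\into f_n\le C\|f\|_{\luo}$. The genuinely delicate term is $\int_{\{u_n<\underline{\omega}\}} f_n u_n^{1-\gamma}$, where \eqref{h1} gives $h_n(u_n)u_n\le K_1 u_n^{1-\gamma}$ and the exponent $1-\gamma$ is negative.

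The heart of the matter — and the step I expect to be the main obstacle — is a lower bound $u_n\ge c_0\,\delta$ with $c_0$ independent of $n$, needed to tame that negative power. I would produce it by comparison rather than by a barrier of the form $c\,\varphi_1$, which fails wherever $f$ vanishes near $\partial\Omega$. Fixing $\vare$ and taking $\omega=\Omega\setminus\overline{\Omega}_\vare$ in \eqref{sotto} gives $u_n\ge c$ on the inner boundary, with $c$ independent of $n$; since $-{\rm div}(A(x)\nabla u_n)=h_n(u_n)f_n\ge 0$ in $\Omega_\vare$, the function $u_n$ is a supersolution there and dominates the $A$-harmonic function $w$ with boundary data $c$ on $\partial\Omega_\vare\cap\Omega$ and $0$ on $\partial\Omega$. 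As $w$ does not depend on $n$, Hopf's boundary point lemma (in the form \eqref{hopfvarphi2}) yields $w\ge c'\delta$, whence $u_n\ge c_0\,\delta$ in $\Omega_\vare$ and, trivially, in all of $\Omega$, uniformly in $n$.

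With the lower bound available, Hölder's inequality with exponents $m,m'$ together with $(1-\gamma)m'<0$ gives
$$\int_{\{u_n<\underline{\omega}\}} f_n u_n^{1-\gamma}\le \|f\|_{L^m(\Omega)}\left(\into u_n^{(1-\gamma)m'}\right)^{\frac1{m'}}\le \|f\|_{L^m(\Omega)}\,c_0^{1-\gamma}\left(\into \delta^{(1-\gamma)m'}\right)^{\frac1{m'}},$$
and the last integral is finite exactly when $(1-\gamma)m'>-1$, i.e. $(\gamma-1)m'<1$, which rearranges to the stated threshold $\gamma<2-\tfrac1m$. This closes the uniform bound $\into|\nabla u_n|^2\le C$. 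I would then conclude by extracting $u_n\rightharpoonup u$ in $\huz$ (and a.e.), using \eqref{sotto} to guarantee $h_n(u_n)\to h(u)$ locally and $f_n\to f$ in $L^m(\Omega)$ to pass to the limit in \eqref{distri}; the membership $u\in\huz$ then encodes the homogeneous boundary datum, in particular \eqref{bd}.
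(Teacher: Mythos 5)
Your proof is correct, and its skeleton coincides with the paper's: test \eqref{1nn} with $u_n$, split the right-hand side according to the size of $u_n$, absorb the nonsingular part using \eqref{h2} with $\theta\ge1$ and the boundedness of $h$ on $[\underline{\omega},\overline{\omega}]$, and close the singular term by H\"older's inequality against $\into\delta^{(1-\gamma)m'}$, finite precisely when $\gamma<2-\frac1m$. The genuine difference is in how the key uniform bound $u_n\ge c_0\delta$ is produced. The paper does not use \eqref{sotto} together with a linear barrier: it introduces the auxiliary solutions $v_n$ of $-\operatorname{div}(A(x)\nabla v_n)=\underline{h}(v_n)f_n$, where $\underline{h}\le h_n$ is the nonincreasing minorant of \eqref{uoh}; the monotonicity of $\underline{h}$ makes $v_n$ nondecreasing in $n$, so that by comparison $u_n\ge v_n\ge v_1$, and the uniform Hopf inequality (quoted from \cite{edr,sdl}) applied to the single fixed function $v_1$ gives $v_1\ge C\delta$. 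You instead take the $n$-uniform interior bound \eqref{sotto} on the inner boundary $\{\delta=\vare\}$, compare $u_n$ in the strip $\Omega_\vare$ with the $n$-independent $A$-harmonic function $w$, and apply the classical Hopf lemma to $w$. Both routes are sound. Yours is more elementary on the Hopf side, since Hopf is invoked only for a solution of the linear homogeneous equation, whereas the paper needs a Hopf-type statement for a solution of a singular semilinear problem; the price is that you lean on the $n$-uniformity of \eqref{sotto} (itself a nontrivial result quoted from \cite{do}) and on continuity of $u_n$ up to $\{\delta=\vare\}$ (available by De Giorgi--Nash--Moser, as $u_n$ is a bounded weak solution). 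The paper's construction has the side benefit that the subsolution $v_1\ge C\delta$ is reused verbatim elsewhere (Theorem \ref{gamma<1L1} and Section \ref{furte}). One small imprecision to fix: \eqref{hopfvarphi2} is a statement about the eigenfunction $\varphi_1$, so for $w$ you should invoke Hopf's boundary point lemma directly, combined with the positivity of $w$ on compact subsets of $\overline{\Omega_\vare}\setminus\partial\Omega$, rather than cite it ``in the form \eqref{hopfvarphi2}''.
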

\begin{proof}
	We take $u_n$ as a test function in \eqref{1nn}; we have 
	\begin{equation}\label{313}
	\begin{array}{l}
	\displaystyle \alpha\int_{\Omega}|\nabla u_n|^2 \le \int_{\Omega}h_n(u_n)f_nu_n \le K_{1}\int_{\{u_n< \underline{\omega}\}}f_n u_n^{1-\gamma} + \int_{\{\underline{\omega}\le u_n \le \overline{\omega}\}}h_n(u_n) f_n u_n  \\ \\ \dys 
	+ K_{2}\int_{\{u_n> \overline{\omega}\}}f_n u_n^{1-\theta} \le  K_{1}\int_{\{u_n< \underline{\omega}\}}f_n u_n^{1-\gamma}  +  \overline{\omega}\max_{[\underline \omega,\overline\omega]}{h}(s) \int_{\{\underline{\omega}\le u_n \le \overline{\omega}\}}f  \bk + K_{2}\int_{\{u_n> \overline{\omega}\}}f \overline{\omega}^{1-\theta}.
	\end{array}
	\end{equation}
 In order to conclude  we need to estimate the first term on the right hand side of \eqref{313}. 
	 To do that, we consider the nonincreasing and continuous function $\underline{h}:\mathbb{R}^+\rightarrow\mathbb{R}^{+}$ given by \eqref{uoh}. Recall that
		 \begin{equation}
		 	\underline{h}(s)\le h_n(s)\,,\ \ \forall \ s>0, \ \ n\in\mathbb{N}\,.
		 	\label{funzionesotto}
		 \end{equation}
		 We then  consider $v_n\in H^1_0(\Omega) $ as the solutions to 
$$
		\begin{cases}
		\displaystyle -\operatorname{div}(A(x)\nabla v_n) = \underline{h}(v_n)f_n &  \text{in}\, \Omega, \\
		v_{n}=0 & \text{on}\ \partial \Omega.
		\label{pbv}
		\end{cases}
$$
As $\underline{h}$  is nonincreasing, it is known that $v_{n}$ is nondecreasing  with respect to $n$ (see \cite{bo,do}) and, using \eqref{funzionesotto}, by comparison $u_n\ge v_n\ge v_{1}$. 
	 We  can  apply Hopf's lemma to $v_{1}$ \bk (see for instance \cite{edr,sdl}) to deduce that \bk
	\begin{equation}
	v_{1}(x)\ge C\delta(x), \text{  for  } x \in \Omega.
	\label{stimau1}
	\end{equation}\bk
	Thus, it follows from the H\"older inequality, from \eqref{stimau1} and from the fact that $u_n\ge v_{1}$, that for the first term in the left hand side of \eqref{313} we have
	$$\int_{\{u_n< \underline{\omega}\}}f_n u_n^{1-\gamma} \le \int_{\Omega}fv_{1}^{1-\gamma}\le C^{1-\gamma}||f||_{L^m(\Omega)}\left( \int_{\Omega} \frac{1}{\delta^{(\gamma-1)m'}}\right)^{\frac{1}{m'}}<\infty,$$
	since $\displaystyle \gamma<2 - \frac{1}{m}.$   
 This concludes the proof.
\end{proof}

\begin{remark}
Observe that the previous proof works  also for $\gamma=1$,  in particular,  as $m\to 1^{+}$, one  recovers the case in which finite energy solutions always exist for $f\in\luo$ in  \emph{continuity} with the result of Theorem \ref{magg1}. 
\end{remark}
\bk
\medskip

Though, in this generality, it seems not so easy to improve it, the (upper) threshold  on $\gamma$ given in Theorem \ref{34}  turns out to be not the optimal one. Consider, for instance, the model case $h(s)={s^{-\gamma}}$ and a function $f$ in $L^{m}(\Omega)$ with $m>1$. In \cite{am} the authors prove the existence of a solution $u\in H^{1}_{0}(\Omega)$ if $1<\gamma<\frac{3m-1}{m+1}$ provided $f$ is strictly bounded away from zero.  Prototypical examples show however that finite energy solutions can be found up to $\gamma<3-\frac{2}{m}$ (see Example \ref{ex} below). Observe that, $3-\frac{2}{m}>\frac{3m-1}{m+1}$ and that, as $m$ tends to infinity, one formally recovers the Lazer-McKenna threshold $\gamma=3$ for bounded (also away from zero, and smooth) data. 

Apart from explicit examples, we also refer to \cite{zc,bgh} in which, as already suggested in \cite[Section 4]{lm}, one sees that these limit values can be reached in the case of the laplacian and  a smooth and bounded away from zero datum $f$ that blows up uniformly at $\partial\Omega$ at a precise rate.   Let us only mention the opposite case of $f$ having compact support on $\Omega$. If this is the  case in fact, if  $\gamma,\theta\geq 1$ then, in view of \eqref{sotto},  $h_{n}(u_{n})u_{n}$ is uniformly bounded on the support of $f$ and the estimate on $u_{n}$ in $\huz$ is for free for any $f\in L^{1}(\Omega)$.  \bk

\medskip

Let us consider
\begin{equation}\label{1cin}\begin{cases}
\dys -{\rm div} (A(x)\nabla  u)=\frac{f}{u^{\gamma}} & \text{in}\ \;\Omega,\\
u=0 & \text{on}\ \;\partial\Omega,
\end{cases}\end{equation}
for $\gamma>1$ and $f\in L^{m}(\Omega)$, $m>1$, a positive function. We have the following 
\begin{theorem}
	Let $m, \gamma>1$. Then the solution $u$ to problem \eqref{1cin} belongs to $H^1_0(\Omega)$ for any positive $f\in L^{m}(\Omega)$ if and only if $\gamma<3-\frac{2}{m}$. 
	\label{teosharp}
\end{theorem}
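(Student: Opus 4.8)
The statement being an equivalence, I would prove the two implications separately, but in both I would start from the same reduction. By Theorem~\ref{bocat} a finite energy solution of \eqref{1cin} is a weak solution, so testing \eqref{H10test} with $\phi=u$ (here $h(s)=s^{-\gamma}$) gives the energy identity
$$\int_\Omega A(x)\nabla u\cdot\nabla u=\int_\Omega f\,u^{1-\gamma}\,,$$
and, recalling the approximating problems \eqref{1nn} and the uniform interior bound \eqref{sotto}, one has $u\in H^1_0(\Omega)$ if and only if the \emph{singular energy} $\int_\Omega f\,u^{1-\gamma}$ is finite (equivalently $\sup_n\int_\Omega f_n u_n^{1-\gamma}<\infty$). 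As in \eqref{313}, the interior region is harmless since $u_n\ge c_\omega$ there, so everything is decided in a boundary strip $\Omega_\varepsilon$.

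For the necessity (the ``only if''), I would assume $\gamma\ge 3-\frac2m$ and exhibit a datum making the singular energy diverge. Take $f\simeq\delta^{-a}$ in $\Omega_\varepsilon$ (smooth and positive inside) with $a<\frac1m$, so that $f\in L^m(\Omega)$. Since then $f\le C\varphi_1^{-a}$ by \eqref{hopfvarphi2}, the function $M\varphi_1^{t}$ with $t=\frac{2-a}{\gamma+1}$ is, for $M$ large, a supersolution of \eqref{1cin} in $\Omega_\varepsilon$ --- the exponents match exactly because $t-2=-a-t\gamma$ --- so the comparison used for nonincreasing reactions (as in Theorem~\ref{gamma<1}) yields the \emph{sharp} upper bound $u\le M\varphi_1^{t}\le C\,\delta^{\frac{2-a}{\gamma+1}}$. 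Plugging this into the energy identity and using $1-\gamma<0$,
$$\int_\Omega f\,u^{1-\gamma}\ \ge\ c\int_{\Omega_\varepsilon}\delta^{-a+\frac{(2-a)(1-\gamma)}{\gamma+1}}\,,$$
and the exponent is $\le-1$ precisely when $\gamma\ge 3-2a$. Choosing $a<\frac1m$ with $3-2a\le\gamma$ (possible exactly because $\gamma>3-\frac2m$; at the endpoint $\gamma=3-\frac2m$ one replaces $\delta^{-a}$ by $\delta^{-1/m}|\log\delta|^{-\sigma}$) forces $\int_\Omega f\,u^{1-\gamma}=\infty$, hence $u\notin H^1_0(\Omega)$. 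This is the computation behind Example~\ref{ex}.

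For the sufficiency (the ``if''), I would assume $\gamma<3-\frac2m$ and bound $\int_{\Omega_\varepsilon}f_n u_n^{1-\gamma}$ uniformly. The crude Hopf bound $u_n\ge c\delta$ only gives Theorem~\ref{34}'s range $\gamma<2-\frac1m$, and the profile $\delta^{2/(\gamma+1)}$ (available only for $f$ bounded away from zero) is still lossy; the sharp threshold forces one to use that the singular reaction pins the \emph{balanced profile} to the local size of $f$. Concretely, the plan is to prove the $f$-adapted lower bound $u_n^{\gamma+1}\gtrsim \delta^{2}f_n$ in $\Omega_\varepsilon$ --- by a local comparison at each boundary scale $\delta(x)\sim d$, where interior Harnack makes $u_n$ comparable to its value on $B_{d/2}(x)$ and the equation balances $u_n\,d^{-2}\sim f_n\,u_n^{-\gamma}$. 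Since $1-\gamma<0$, inserting this bound gives
$$\int_{\Omega_\varepsilon} f\,u_n^{1-\gamma}\ \le\ C\int_{\Omega_\varepsilon}\delta^{\frac{2(1-\gamma)}{\gamma+1}}\,f^{\frac{2}{\gamma+1}}\,,$$
and a single H\"older inequality with exponents $\frac{(\gamma+1)m}{2}$ and its conjugate shows the right-hand side is finite (using $\int\delta^{-r}<\infty\iff r<1$) if and only if $\gamma<3-\frac2m$, the extremal being again $f\simeq\delta^{-1/m}$.

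The main obstacle is exactly this last lower bound $u_n^{\gamma+1}\gtrsim\delta^2 f_n$ for a \emph{general} $f\in L^m(\Omega)$ that need not be bounded below: no uniform pointwise barrier beats $c\delta$ (for compactly supported $f$ one even has $u\sim\delta$, far below the balanced profile), so the estimate must be localized scale-by-scale near $\partial\Omega$. I expect the delicate points to be (i) turning the heuristic balance into a rigorous interior/boundary Harnack and barrier argument at every dyadic distance to $\partial\Omega$, uniformly in $n$ and for the Lipschitz matrix $A$ satisfying \eqref{lipA}, and (ii) replacing $f_n$ by a local average where $f$ is rough and controlling it against $\|f\|_{L^m(\Omega)}$ (a maximal-function type bound), so as to keep the exact H\"older bookkeeping that produces $3-\frac2m$ rather than the lossy thresholds $2-\frac1m$ or $\frac{3m-1}{m+1}$. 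The comparison theory and the Kato type inequality of Section~\ref{veron} (Lemma~\ref{Katoinequalitylocal}) are the natural tools to make these comparisons rigorous.
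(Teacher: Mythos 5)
Your ``only if'' half is essentially the paper's own argument and is sound: the paper takes the datum $f(x)=\max\bigl(\delta(x)^{-1/m}\log(1/\delta(x))^{-1},1\bigr)$ (one choice that covers the endpoint $\gamma=3-\tfrac2m$ and the supercritical range simultaneously, where you split into $a<\tfrac1m$ plus a log-corrected endpoint), shows that $M\varphi_1^{t}$ with $t=\tfrac{2}{\gamma+1}-\tfrac{1}{m(\gamma+1)}$ is a supersolution of the approximating problems, and then uses Theorem \ref{bocat} to test with $u$ and reach the same contradiction. One caution on your opening ``reduction'': for $\gamma>1$ the implication ``$\int_\Omega f u^{1-\gamma}<\infty \Rightarrow u\in H^1_0(\Omega)$'' is \emph{not} known --- it is precisely the conjecture discussed in Section \ref{furte}; the Proposition there only covers $\gamma<1$, because the monotone approximation gives $u_n\le u$, hence $u_n^{1-\gamma}\ge u^{1-\gamma}$, the wrong direction. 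You only ever use the harmless directions (testing with $u$ when $u\in H^1_0(\Omega)$, and a uniform bound on $\int_\Omega f_n u_n^{1-\gamma}$ yielding the $H^1_0$ estimate), so your structure survives, but the equivalence should not be asserted.

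The genuine gap is in the ``if'' half. Everything there hinges on the lower bound $u_n^{\gamma+1}\gtrsim\delta^2 f_n$ in $\Omega_\varepsilon$, which you yourself flag as ``the main obstacle'' but never prove. Pointwise it is false: if $f$ concentrates on a ball $B_r(x_0)$ with $r\ll d=\delta(x_0)$, the reaction contributes only about $r^2 f\,u^{-\gamma}$ to $u(x_0)$, i.e. $u(x_0)^{\gamma+1}\sim r^2 f$, not $d^2 f$. Only some carefully averaged version could hold, and then your H\"older bookkeeping breaks in a serious way, since the local average enters with a \emph{negative} exponent and hence must be bounded from below, which a general positive $f\in L^m(\Omega)$ does not allow pointwise. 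The paper's proof of sufficiency is entirely different and much shorter: it invokes Theorem \ref{cino} (the Sun--Zhang variational criterion, re-adapted to a bounded matrix), by which a finite energy solution exists if and only if \emph{some} $u_0\in H^1_0(\Omega)$ satisfies \eqref{condcinese}; taking $u_0=\delta^{t}$ and applying H\"older, one needs $t>\tfrac12$ (so that $\delta^t\in H^1_0(\Omega)$) and $t(\gamma-1)m'<1$, which are simultaneously possible precisely when $\gamma<3-\tfrac2m$. In other words, the sharp threshold is obtained without any fine boundary analysis of the actual solution --- the paper explicitly remarks that improving the barrier-based thresholds ``seems not so easy'' in this generality. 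As it stands, your proposal proves necessity but not sufficiency; if you could actually establish your balanced-profile lemma uniformly in $n$, that would be a result of independent interest, but it is not a technical afterthought that can be waved through.
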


The sufficient condition in Theorem \ref{teosharp} is the  easiest part as it  essentially follows by a (highly nontrivial) result proved, with variational tools,  in  \cite{sz}. More precisely, a line by line re-adaptation to the case a bounded matrix of the proof of \cite[Theorem 1]{sz} allows us to state the following
\begin{theorem}\label{cino}
	Let  $\gamma>1$ and let $f$ be a positive function in  $L^1(\Omega)$. 
	Then there exists a unique solution $u\in H^1_0(\Omega)$ to problem \eqref{1cin} if and only if there exists a function $u_0\in H^1_0(\Omega)$ such that 
	\begin{equation}
	\int_{\Omega}fu_0^{1-\gamma}<+\infty.
	\label{condcinese}
	\end{equation}
\end{theorem}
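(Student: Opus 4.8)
The plan is to prove the two implications separately, getting uniqueness for free from Theorem \ref{bocat} and reducing existence to the minimization of the convex energy attached to \eqref{1cin}. The \emph{necessity} of \eqref{condcinese} is immediate: if $u\in H^1_0(\Omega)$ solves \eqref{1cin}, then by Theorem \ref{bocat} it satisfies the weak formulation against every test function in $H^1_0(\Omega)$, and taking $u$ itself gives
\[
\int_\Omega A(x)\nabla u\cdot\nabla u=\int_\Omega f\,u^{1-\gamma}.
\]
Since the left-hand side is finite, the choice $u_0:=u$ fulfils \eqref{condcinese}.

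For the \emph{sufficiency}, since $\gamma>1$ the function $s\mapsto s^{1-\gamma}$ is convex and decreasing on $(0,\infty)$; exploiting that $A$ is symmetric I would consider the strictly convex functional
\[
J(v)=\frac12\int_\Omega A(x)\nabla v\cdot\nabla v+\frac{1}{\gamma-1}\int_\Omega f\,v^{1-\gamma}
\]
on the convex set $\mathcal{C}=\{v\in H^1_0(\Omega):\ v\ge 0,\ \int_\Omega f v^{1-\gamma}<\infty\}$, which is nonempty thanks to $u_0$ (note that \eqref{condcinese} forces $u_0>0$ a.e.). Coercivity follows from the ellipticity \eqref{lipA} together with the nonnegativity of the singular term, and weak lower semicontinuity from that of the quadratic form combined with Fatou's lemma applied, along an a.e.\ convergent subsequence, to the nonnegative integrand $f v^{1-\gamma}$. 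The direct method then produces a minimizer $u\in\mathcal{C}$, unique by strict convexity; moreover $J(0)=+\infty$ forces $u\not\equiv 0$.

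It remains to check that the minimizer solves \eqref{1cin}. First I would use one-sided perturbations: for $0\le\varphi\in H^1_0(\Omega)\cap L^\infty(\Omega)$ and $t>0$ the competitor $u+t\varphi$ still lies in $\mathcal{C}$, and dividing $J(u+t\varphi)\ge J(u)$ by $t$ and letting $t\to 0^+$ (the difference quotient of the singular term being monotone in $t$, so that a monotone-convergence argument applies) yields
\[
\int_\Omega A(x)\nabla u\cdot\nabla\varphi\ \ge\ \int_\Omega f\,u^{-\gamma}\varphi,
\]
so $u$ is in particular a nonnegative supersolution of a uniformly elliptic equation. Since $u\not\equiv 0$, the weak Harnack inequality gives $u\ge c_\omega>0$ on every $\omega\subset\subset\Omega$, in agreement with \eqref{sotto}. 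This local positivity is the decisive gain: for $\varphi\in C^1_c(\Omega)$ and $|t|$ small, $u+t\varphi$ stays positive on $\mathrm{supp}\,\varphi$ and coincides with $u$ elsewhere, hence $u+t\varphi\in\mathcal{C}$ for \emph{both} signs of $t$; along this line the singular term is smooth, so $\frac{d}{dt}J(u+t\varphi)\big|_{t=0}=0$ produces exactly the distributional identity \eqref{distri}. Thus $u\in H^1_0(\Omega)$ is a distributional solution, and by the uniqueness of Theorem \ref{uniquenessmain} it coincides with the solution in the statement, which is therefore of finite energy.

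The step I expect to be the main obstacle is precisely the passage from minimality to the exact equation: downward perturbations are forbidden globally, because decreasing $v$ near its zero set can drive $\int_\Omega f v^{1-\gamma}$ to $+\infty$, so $J$ cannot be freely differentiated. The route above circumvents this by extracting only the supersolution inequality from the admissible (upward) perturbations, deducing strict local positivity by weak Harnack, and only then perturbing with compactly supported test functions, where the singularity is harmless. This is exactly the point where the re-adaptation of \cite[Theorem 1]{sz} to the symmetric, variable-coefficient operator $A(x)$ carries the real analytical weight, the symmetry of $A$ being what makes $J$ the genuine energy whose critical points solve \eqref{1cin}.
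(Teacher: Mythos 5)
Your proof is correct and follows essentially the same route as the paper's: the paper does not write out a proof of Theorem \ref{cino} but defers to a line-by-line re-adaptation of the variational argument of \cite[Theorem 1]{sz}, which is precisely what you reconstruct — necessity by testing \eqref{H10test} with $u$ itself via Theorem \ref{bocat}, and sufficiency by minimizing the convex energy $J$, extracting the supersolution inequality from one-sided variations, and using local positivity to pass to the full Euler--Lagrange identity \eqref{distri}. Your device of weak Harnack plus compactly supported two-sided perturbations is a clean equivalent of the competitor trick in \cite{sz}, and your appeal to Theorem \ref{bocat} (or \ref{uniquenessmain}) for uniqueness matches the paper's intent.
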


\begin{proof}[Proof of Theorem \ref{teosharp}]
 To prove that the solution $u$ belongs to $H^1_0(\Omega)$ only plug 
  $u_{0}(x)=\delta(x)^{t}$ into \eqref{condcinese}. Using H\"older inequality one has
	$$
	\int_{\Omega}fu_0^{1-\gamma}\leq  C\left(\into \delta^{t(1-\gamma)m'}\right)^{\frac{1}{m'}}\,.
	$$ 
Since $\gamma<3-\frac{2}{m}$ it is possible to choose $t$ greater than $\frac12$ so that $t(1-\gamma)m'>-1$ and one can apply the result of Theorem \ref{cino}. 
	
	\noindent In order to prove optimality we let $\gamma\geq 3-\frac{2}{m}$, we define
	\begin{equation}
	f(x):=  \max\left(\frac{1}{\delta(x)^{\frac{1}{m}}\log\left(\frac{1}{\delta(x)}\right)}, 1\right)\,,
	\label{condizionef}
	\end{equation}
and $f_{n}=T_{n}(f)$. One can prove that a suitable positive constant  $M$ exists such that $M\varphi_{1}^{t}$ with 
\begin{equation*}
	\displaystyle t = \frac{2}{\gamma+1} - \frac{1}{m(\gamma+1)}\,,
	\end{equation*}
	(observe that $0<t\leq \frac12$) is a  strong \bk super-solution of the approximating problems 
	\begin{equation}\label{1cinn}\nonumber\begin{cases}
-{\rm div} (A(x)\nabla  u_{n})=\frac{f_{n}}{(u_{n}+\frac{1}{n})^{\gamma}} & \text{in}\;\ \Omega,\\
u_{n}=0 & \text{on}\;\ \partial\Omega.
\end{cases}\end{equation}

	Indeed, since 
	$$\frac{f}{M^\gamma\varphi_1^{t\gamma}} \ge \frac{f_n}{(M\varphi_1^{t}+\frac{1}{n})^\gamma}\,,$$
	 we only need to show that 
	$$\displaystyle -{\rm div}(A(x) \nabla M\varphi_1^t )= \frac{1}{M^\gamma\varphi_1^{\gamma t}}\left(M^{1+\gamma}t(1-t)A(x)\nabla \varphi_1\cdot\nabla\varphi_{1}\varphi_1^{t-2+\gamma t} + M^{1+\gamma}\lambda_1t\varphi_1^{t+\gamma t}\right)\ge \frac{f}{M^\gamma\varphi_1^{\gamma t}},$$
	that is implied by 
	\begin{equation}
	\alpha M^{1+\gamma}t(1-t)|\nabla \varphi_1|^2\varphi_1^{t-2+\gamma t} + M^{1+\gamma}\lambda_1t\varphi_1^{t+\gamma t}\ge f.
	\label{stimaesempio1}
	\end{equation}
  Let $\vare< {e^{-1}}$ be a small enough positive number such that $\Omega\setminus\overline\Omega_\epsilon$ is smooth and compactly contained in $\Omega$ and  observe that both terms on the left hand side of \eqref{stimaesempio1} are nonnegative.  \bk 
	Therefore, as $\varphi_{1}$ is locally strictly away from zero,   it is possible to choose $M$ large enough such that  $M^{1+\gamma}\lambda_1t\varphi_1^{t+\gamma t}\ge f $\,, for any  $x \in \Omega\setminus\overline\Omega_\epsilon$. \bk 
	
	Otherwise, if $x \in \Omega_\epsilon$, it suffices to prove the first inequality in 
	\begin{equation*}
	M^{1+\gamma}t(1-t)|\nabla \varphi_1|^2\varphi_1^{t-2+\gamma t}\ge \frac{1}{\delta(x)^{\frac{1}{m}}}\ge \frac{1}{\delta(x)^{\frac{1}{m}}\log\left(\frac{1}{\delta(x)}\right)} = f(x),
	\label{stimaesempio3}
	\end{equation*}
	where we used that  $\delta(x)<\frac{1}{e}$.
	Using Hopf's lemma (and \eqref{hopfvarphi2}) the previous reduces by 
	\begin{equation}
 \frac{M^{1+\gamma}}{\delta(x)^{-t+2-\gamma t}} \ge \frac{K}{\delta(x)^\frac{1}{m}}\,,
	\label{stimaesempio4}
	\end{equation}
	where $K$ is a constant that only depends on $\Omega, \varphi_{1}$, $m$, and $\gamma$. Thanks to the choice of 
	$t$  \eqref{stimaesempio4} is satisfied for $M$ large enough. 
	
Therefore, we can apply the comparison principle  between  $M\varphi_1^t$ and $u_n$ (as in the proof of Theorem \ref{gamma<1})  to obtain $M\varphi_1^t\ge u_n$ and so $ M\varphi_1^t\ge u$ passing to the  a.e. limit.
	 \noindent Now suppose by contradiction that $u\in H^1_0(\Omega)$.  Then, by Theorem \ref{bocat},  we can use $u$ as test function in \eqref{1cin}; recalling \eqref{hopfvarphi2}, \eqref{condizionef},  and that  $\gamma \ge 3-\frac{2}{m}$,  we then have
	$$\beta\bk \displaystyle \int_{\Omega} |\nabla u|^2 \ge \int_{\Omega}{f}{u^{1-\gamma}} \ge M^{{t(1-\gamma)}}\int_{\Omega}f\varphi_1^{t(1-\gamma)}=\infty,$$
	 a contradiction. 
\end{proof}

\subsection{On the summability of  the lower order term} \label{furt} 

We discuss a  prototypical  example of solutions to problems as in \eqref{1cin}. 
\begin{example}\label{ex}
Consider problem \eqref{1cin}  with $A\equiv I$ (i.e. the case of the laplacian). Let $\Omega=B_{1}(0)$ and $u = (1-|x|^{2})^{\eta}$, with $\eta>0$.  Then, if 
	\begin{equation}\label{al}\frac{1}{1+\gamma}<\eta < 1,\end{equation}  $u$ solves \eqref{1cin} with 
	$$
	f\sim \frac{1}{(1-|x|^{2})^{2-\eta -\eta\gamma}}\in L^{1}(\Omega). 
	$$

First of all observe that, as $\eta<1$, then  $-\Delta u\notin L^{1}(\Omega)$, while,   $-\Delta u \in L^{1}(\Omega, \delta)$ for any $\eta>0$. This latter remark should be compared with  Lemma \ref{M1respectdelta} below. 
Using \eqref{al}, we also have
\begin{itemize}
	\item [(i)] if $\gamma=1$ the solution is always in $H^1_0(\Omega)$ as expected,
	\item [(ii)] if $\gamma>1$ then $f$ is in $L^{m}(\Omega)$ provided 
	$$
	\eta >\frac{2-\frac{1}{m}}{\gamma+1}\,,
	$$
	In particular, $u\in H^1_0(\Omega)$ if $\gamma<3-\frac{2}{m}$. 
	\item [(iii)] if $\gamma<1$ then $u$ is always in $H^{1}_{0}(\Omega)$  and $f\in L^{m}(\Omega)$ for any
	$$
	m<\frac{1}{2-\eta-\eta\gamma}\,.
	$$
	We observe that 
	\begin{equation}\label{opt}
	\frac{1}{2-\eta-\eta\gamma}\nearrow\frac{1}{1-\gamma}\ \ \text{as $\eta\to1^{-}$}\,.
	\end{equation}
\end{itemize}
\end{example}\bk 
The previous example shows that one cannot expect in general the lower order term ${f}{u^{-\gamma}}$ to belong to $L^{1}(\Omega)$, even for small $\gamma$. Suitable  weighted summability of the lower order term  will be given in the next section  (see Lemma \ref{M1respectdelta}).  Although observe that in (iii) $m$ needs to be small enough.   For $m=\infty$ (i.e. $f$ is a  nonnegative bounded function) and a mild blow up at $0$ (i.e. $\gamma<1$), as a consequence of Theorem 1.2 in \cite{gl},  one has ${f}{u^{-\gamma}}\in L^{1}(\Omega)$. \bk 
In general,  we have the following result in which  the threshold $\frac{1}{1-\gamma}$ cannot be improved in view of \eqref{opt} and   one can allow $h(\infty)\not= 0$.  \bk

\begin{theorem}
	Let $h$ satisfy \eqref{h1} with $\gamma<1$, and let $f$ be a nonnegative function in $L^{m}(\Omega)$ with $m>\frac{1}{1-\gamma}$. Then there exists a solution to problem \eqref{1f} such that  $h(u)f \in L^{1}(\Omega)$. 
	\label{gamma<1L1}
\end{theorem}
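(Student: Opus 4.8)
The plan is to produce the solution as the a.e.\ limit of the approximating sequence $u_n$ solving \eqref{1nn}, whose existence and convergence to a distributional solution $u$ of \eqref{1f} is already granted by Theorem~\ref{esiste} (applied with $\mu=f\,dx$). Since $u_n\to u$ a.e.\ and, as we will see, $u$ is bounded away from zero by a positive multiple of $\delta$, the continuity of $h$ gives $h_n(u_n)f_n\to h(u)f$ a.e.; it therefore suffices to prove the single uniform estimate
$$\int_{\Omega} h_n(u_n)f_n \le C,$$
with $C$ independent of $n$, because Fatou's lemma will then immediately yield $\int_\Omega h(u)f\le\liminf_n\int_\Omega h_n(u_n)f_n\le C<\infty$, i.e.\ $h(u)f\in L^1(\Omega)$.

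To bound the integral I would first isolate the genuinely singular region. Using \eqref{infty}, the continuity of $h$ guarantees $C_h:=\sup_{[\underline\omega,\infty)}h<\infty$, so that on $\{u_n\ge\underline\omega\}$ one has $h_n(u_n)f_n\le C_h f$ and hence $\int_{\{u_n\ge\underline\omega\}}h_n(u_n)f_n\le C_h\|f\|_{L^1(\Omega)}\le C$. Note that this part does not require $h(\infty)=0$, which is exactly why the statement is allowed to include the case $h(\infty)\ne 0$. The whole difficulty is thus concentrated in $\int_{\{u_n<\underline\omega\}}h_n(u_n)f_n$, where \eqref{h1} gives the pointwise bound $h_n(u_n)f_n\le K_1 f\,u_n^{-\gamma}$.

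To control the last term I would reproduce the lower bound already exploited in the proof of Theorem~\ref{34}: comparing $u_n$ with the solution $v_n$ of the minorant problem built from $\underline h$ and using \eqref{funzionesotto} yields $u_n\ge v_n\ge v_1$, while Hopf's lemma gives $v_1\ge C\delta$ as in \eqref{stimau1}; hence $u_n\ge C\delta$ \emph{uniformly} in $n$. Plugging this in and applying Hölder's inequality with exponents $m$ and $m'$,
$$\int_{\{u_n<\underline\omega\}}h_n(u_n)f_n\le C\int_{\Omega} \frac{f}{\delta^{\gamma}}\le C\|f\|_{L^m(\Omega)}\left(\int_{\Omega} \delta^{-\gamma m'}\right)^{\frac{1}{m'}}.$$
By the standing property that $\int_\Omega\delta^r<\infty$ iff $r>-1$, the last integral is finite precisely when $\gamma m'<1$, that is $m>\frac{1}{1-\gamma}$, which is exactly the hypothesis; this is also the point where optimality of the threshold (cf.\ \eqref{opt}) is visible.

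Combining the two contributions gives the desired uniform bound, and Fatou's lemma closes the argument. The main obstacle is the uniform-in-$n$ lower bound $u_n\ge C\delta$: everything downstream is a routine application of Hölder with the sharp weight $\delta^{-\gamma}$, but it is this bound that couples the singular behaviour of $h$ near zero to the boundary decay of $u$ and is ultimately responsible for the precise exponent $\frac{1}{1-\gamma}$. Since the bound is obtained by the same comparison-with-$\underline h$ plus Hopf device already used for Theorem~\ref{34}, no genuinely new machinery is required.
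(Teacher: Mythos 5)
Your proposal is correct and follows essentially the same route as the paper: the paper likewise obtains the uniform lower bound $u_n\ge v_n\ge v_1\ge C\delta$ by comparison with the $\underline{h}$-problem and Hopf's lemma, splits the integral at the level $\underline{\omega}$, and applies H\"older's inequality with exponents $m,m'$ so that $\int_\Omega \delta^{-\gamma m'}<\infty$ exactly when $m>\frac{1}{1-\gamma}$. The only cosmetic difference is that the paper passes to the a.e.\ limit first and estimates $\int_\Omega h(u)f$ directly via $u\ge C\delta$, whereas you bound $\int_\Omega h_n(u_n)f_n$ uniformly and invoke Fatou's lemma; both are valid.
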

\begin{proof}

Let 
  $v_{n}$ be the solution to problem 
$$
		\begin{cases}
		\displaystyle -\operatorname{div}(A(x)\nabla v_n) = \underline{h}(v_n)f_n &  \text{in}\, \Omega, \\
		v_{n}=0 & \text{on}\ \partial \Omega,
		\end{cases}
$$
defined as in the proof of Theorem \ref{regularity}. Notice that the fact that $u_{n}\geq v_{n}\geq v_{1}\geq C \delta$ in $\Omega$ (for suitable constant $C$)  is independent of the value of $\gamma$ and so it holds true still in the case $\gamma<1$.  Passing to the a.e. limit  one then obtains $u\geq v_{1}\geq C \delta$ and so, by H\"older's inequality, one has 
$$\begin{array}{l}
\dys 	\into h(u) f \le K_{1} \int_{\{u< \underline{\omega}\}}  f u^{-\gamma} +\sup_{[\underline \omega,\infty)}{h}(s)\int_{\{u \ge \underline{\omega}\}}  f
	\\ \\ \dys  \leq  
	C\into f v_{1}^{-\gamma} + \sup_{[\underline \omega,\infty)}{h}(s)\int_{\Omega}  f  \leq C\left(\into v_{1}^{-m'\gamma}\right)^\frac{1}{m'}+C\,\end{array}
$$
	and the last integral is finite as $m> \frac{1}{1-\gamma}$.
\end{proof}

\medskip 

\section{Infinite energy solutions} 
\setcounter{equation}{0}	
\label{uniqueness_distributional}

\subsection{Existence of a solution} In this section we prove Theorem \ref{esiste}.

\begin{proof}[Proof of Theorem \ref{esiste}]
Following   \cite{do},  the existence of a  solution to \eqref{1} can be  proven by approximating with the solutions to the desingularized problems 
\begin{equation}\label{1n}\begin{cases}
L u_n= h_n(u_n)f_n & \text{in}\;\ \Omega,\\
u_n=0 & \text{on}\;\ \partial\Omega,
\end{cases}\end{equation}
where $h_{n}(s)=T_{n}(h(s))$ and $f_{n}$	is  a sequence of smooth functions suitably converging to $\mu$. The following a priori estimates holds:
$$
\|T_k(u_{n})\|_{H^1_0(\Omega)}\leq C\;\ \text{if}\ \;\gamma\le1\,,\ \text{and }\ \ 
	\|T_k(u_{n})^{\frac{\gamma+1}{2}}\|_{H^1_0(\Omega)}\leq C\;\ \text{if}\;\ \gamma>1\,,
$$
for any $k>0$. Moreover, \eqref{sotto} holds 
and, up to subsequences,  $u_{n}$ a.e. converges towards a positive function $u\in W^{1,1}_{loc}(\Omega)$ such that
\begin{equation}\label{trunc}\begin{cases}T_k(u)\; \ \ \ \in H^1_0(\Omega)\;\ \text{if}\ \;\gamma\le1\\
	T_k^{\frac{\gamma+1}{2}}(u)\;\ \in H^1_0(\Omega)\;\ \text{if}\;\ \gamma>1\,,
	\end{cases} \end{equation}
$ h(u) \in L^\infty_{loc}(\Omega,\mu_d)$, and \eqref{weakdef3}	is satisfied (see \cite[Theorem 3.3]{do}).  One also has that, for any $\gamma>0$ and for any $k>0$, 
\begin{equation}\label{gk} \|G_k(u_{n})\|_{W^{1,q}_{0}(\Omega)}\leq C\,, \ \ \text{for all}\ \ q<\frac{N}{N-1} \,,\end{equation}
where $C$ does not depend on $n$; in particular $G_{k}(u)\in W^{1,1}_{0}(\Omega)$. 	 The proof of \eqref{gk} is  standard and it amounts to choose $T_{r}(G_{k}(u_{n}))$  as test function in \eqref{1n}; this leads to the following estimate 
	$$\dys \into |\nabla T_r(G_k(u_n))|^2\le Cr,\ \ \ \text{for any}\ r>0\,,$$
which is known to imply \eqref{gk}. 
So that, recalling \eqref{trunc}, if $\gamma\leq 1$ we have that $u\in W^{1,1}_{0}(\Omega)$ and the proof is complete. 

If $\gamma>1$ As $u = T_1(u) +G_1(u)$ this implies in particular that $u\in\luo$ and, moreover, using H\"older's inequality one has 
$$
\begin{array}{l}
\dys \frac{1}{\vare}\int_{\Omega_{\vare}} u \leq \vare^{\frac{1-\gamma}{\gamma+1}}\left( \frac{1}{\vare}\int_{\Omega_{\vare}} T_{1}(u)^{\frac{\gamma+1}{2}}\right)^{\frac{2}{\gamma+1}}|\Omega_{\vare}|^{\frac{\gamma-1}{\gamma+1}} + \frac{1}{\vare} \int_{\Omega_{\vare}} G_{1}(u) \\ \\
\dys \leq C\left(\frac{1}{\vare}\int_{\Omega_{\vare}} T_{1}(u)^{\frac{\gamma+1}{2}}\right)^{\frac{2}{\gamma+1}}+  \frac{1}{\vare}\int_{\Omega_{\vare}} G_{1}(u)\stackrel{{\vare}\to0}{\longrightarrow} 0
\end{array}
$$
in view of \eqref{trunc}, that is  \eqref{bd} holds. 

\end{proof}

\begin{remark}
We stress that, as can be deduced by the previous proof,   all weak solutions in the sense considered, for instance,  in  \cite{bo,po,op,cst} also satisfy problem \eqref{1} in the sense of Definition \ref{def}.  We also remark that the previous proof easily extends to the case of a matrix $A(x)$ with bounded measurable coefficients in Lipschitz domains. 
\end{remark}

\subsection{Uniqueness of  solutions} 
We now   prove   Theorem \ref{uniquenessmain}. The proof will rely on  an application of the Kato's type inequality given in Lemma \ref{Katoinequalitylocal}.  

  The following is  a suitable extension  of a  property which is well known for  superharmonic functions (see for instance \cite{cm,v, mv} and references therein).  It will be a key ingredient in order to prove suitable   weighted summability on the lower order term of our singular problem.    \bk

\begin{lemma}\label{M1respectdelta}
Let $u\in L^1(\Omega) \cap W^{1,1}_{loc}(\Omega)$ be such that $Lu = \mu$ in the sense of distributions for some nonnegative measure $\mu\in \mathcal{M}_{loc}(\Omega)$. Then $\mu\in \mathcal{M}(\Omega,\delta)$.	
\end{lemma}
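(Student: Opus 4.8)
The plan is to pair the equation with the first eigenfunction $\varphi_1^*$ of the adjoint operator, i.e. the positive solution of $L^*\varphi_1^*=\lambda_1\varphi_1^*$, $\varphi_1^*=0$ on $\partial\Omega$. Exactly as in \eqref{hopfvarphi2} (the same Hopf argument applies to $L^*$, whose coefficients are still Lipschitz), one has $\varphi_1^*\in C^1(\overline\Omega)\cap W^{2,p}(\Omega)$ and $c_1\delta\le\varphi_1^*\le c_2\delta$. Since $\mu\ge0$ and $\varphi_1^*\ge c_1\delta$, the assertion $\mu\in\mathcal{M}(\Omega,\delta)$ is equivalent to $\int_\Omega\varphi_1^*\,d\mu<\infty$. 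Formally this is trivial, because inserting $\varphi_1^*$ in $\int_\Omega uL^*\varphi=\int_\Omega\varphi\,d\mu$ would give $\int_\Omega\varphi_1^*\,d\mu=\lambda_1\int_\Omega u\varphi_1^*\le\lambda_1\|u\|_{L^1(\Omega)}\|\varphi_1^*\|_{L^\infty(\Omega)}$; the whole difficulty is that $\varphi_1^*$ only vanishes \emph{on} $\partial\Omega$ and is not an admissible (compactly supported) test function, so this pairing must be produced through a boundary layer analysis.

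I would therefore exhaust $\Omega$ by the superlevel sets $\Omega_t=\{\varphi_1^*>t\}$, which for a.e. small $t>0$ are smooth and compactly contained in $\Omega$, with $\Omega_t\uparrow\Omega$. On each $\Omega_t$ both $u\in W^{1,1}(\Omega_t)$ and $\varphi_1^*$ are regular, and Green's formula for the couple $(L,L^*)$, together with $\varphi_1^*\equiv t$ on $\partial\Omega_t$ and $\int_{\partial\Omega_t}(A\nabla u)\cdot n=\int_{\Omega_t}\operatorname{div}(A\nabla u)=-\mu(\Omega_t)$, yields the exact identity
\[
\int_{\Omega_t}(\varphi_1^*-t)\,d\mu=\lambda_1\int_{\Omega_t}u\,\varphi_1^*+\int_{\partial\Omega_t}u\,(A^*\nabla\varphi_1^*)\cdot n\,dS.
\]
The left-hand side is nonnegative and increases to $\int_\Omega\varphi_1^*\,d\mu$, while the first term on the right is bounded by $\lambda_1\|u\|_{L^1}\|\varphi_1^*\|_{L^\infty}$. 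As $n=-\nabla\varphi_1^*/|\nabla\varphi_1^*|$ on $\partial\Omega_t$, ellipticity gives $(A^*\nabla\varphi_1^*)\cdot n=-\,A^*\nabla\varphi_1^*\cdot\nabla\varphi_1^*/|\nabla\varphi_1^*|\le0$. If $u\ge0$ the boundary integral is then nonpositive and, letting $t\to0$, one obtains $\int_\Omega\varphi_1^*\,d\mu\le\lambda_1\|u\|_{L^1}\|\varphi_1^*\|_{L^\infty}<\infty$, which is the claim.

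For a sign-changing $u$ the boundary flux $B_t:=\int_{\partial\Omega_t}u\,(A^*\nabla\varphi_1^*)\cdot n\,dS$ is the crucial term and constitutes the main obstacle. Splitting $u=u^+-u^-$, the $u^+$ part keeps the favourable sign, so one only has to bound $\int_{\{\varphi_1^*=t\}}u^-\,W\,dS$ from above, where $W=A^*\nabla\varphi_1^*\cdot\nabla\varphi_1^*/|\nabla\varphi_1^*|\ge0$. Since $Lu=\mu\ge0$, Kato's inequality (the local form of Lemma \ref{Katoinequalitylocal}) shows that $u^-$ is $L$-subharmonic, i.e. $Lu^-=-\rho$ for a nonnegative $\rho\in\mathcal{M}_{loc}(\Omega)$. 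Running the same Green identity for the nonnegative function $u^-$ gives
\[
\int_{\{\varphi_1^*=t\}}u^-\,W\,dS=\int_{\Omega_t}(\varphi_1^*-t)\,d\rho+\lambda_1\int_{\Omega_t}u^-\varphi_1^*,
\]
so $B_t$ stays bounded above, and hence the lemma follows, as soon as one knows that $\rho\in\mathcal{M}(\Omega,\delta)$.

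The heart of the matter is thus this last fact for the nonnegative, $L^1$, $L$-subharmonic function $u^-$, and I would settle it through the Riesz decomposition. The function $h:=u^-+\mathbb{G}\rho$, where $\mathbb{G}\rho$ denotes the Green potential of $\rho$, satisfies $Lh=-\rho+\rho=0$ and is \emph{nonnegative} (both summands are) and finite a.e.; being a nonnegative $L$-harmonic function that is finite at an interior point, by Harnack it is finite throughout $\Omega$ and admits a finite representing (Martin/boundary) measure. Consequently such an $h$ is integrable against the weight $\varphi_1^*\sim\delta$ — this is precisely the mean value property $\int_{B}h=|B|\,h(x_0)$ in the model case of the Laplacian — so that $\int_\Omega h\varphi_1^*<\infty$. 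Using $\int_\Omega\mathbb{G}\rho\,\varphi_1^*=\tfrac1{\lambda_1}\int_\Omega\varphi_1^*\,d\rho$ (the adjoint Green operator reproduces $\varphi_1^*$ up to the factor $\lambda_1^{-1}$), one gets $\int_\Omega\varphi_1^*\,d\rho=\lambda_1\big(\int_\Omega h\varphi_1^*-\int_\Omega u^-\varphi_1^*\big)<\infty$, i.e. $\rho\in\mathcal{M}(\Omega,\delta)$, which closes the argument. The delicate point — and the only place where the sign of $u$ really enters — is the integrability of the nonnegative harmonic majorant $h$; this is where the assumption $u\ge0$ of the naive argument is traded for the classical integrability of positive $L$-harmonic functions in smooth bounded domains.
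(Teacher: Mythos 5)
Your argument for \emph{nonnegative} $u$ is correct in substance and takes a genuinely different route from the paper. The paper never sees a boundary integral: it takes $\xi$ solving $L^{*}\xi=1$, $\xi=0$ on $\partial\Omega$ (so $\xi\ge c\delta$ by Hopf, playing the role of your $\varphi_1^{*}$), and tests with the \emph{compactly supported} functions $\varphi_n=\frac1n\Phi(n\xi)$, where $\Phi$ is convex with bounded derivative and vanishes near $0$; convexity and ellipticity give $L^{*}\varphi_n\le\Phi'(n\xi)$, whence $\int_\Omega\varphi_n\,d\mu\le\int_\Omega u\,L^{*}\varphi_n\le\|\Phi'\|_\infty\|u\|_{L^1(\Omega)}$, and Fatou concludes. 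Your level-set exhaustion with Green's identity on $\Omega_t=\{\varphi_1^{*}>t\}$ reaches the same bound, but at the cost of several justifications you leave implicit: traces of a merely $W^{1,1}_{loc}$ function on $\{\varphi_1^{*}=t\}$ and the validity of Green's formula when $Lu$ is only a measure (this needs a mollification with a Friedrichs-type commutator argument for the variable Lipschitz coefficients, and a Sard-type selection of a.e.\ $t$ with $\nabla\varphi_1^{*}\neq0$ on the level set). These are repairable for a.e.\ $t$, but the paper's choice of test functions sidesteps all of them; that is exactly what the convex truncation $\frac1n\Phi(n\xi)$ buys.

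The sign-changing part, however, contains a genuine and in fact unfixable gap, located precisely where you sense ``the heart of the matter'' lies. The finiteness of $h=u^-+\mathbb{G}\rho$ is \emph{equivalent} to $\mathbb{G}\rho\not\equiv+\infty$, and since $G(x,y)\ge c_x\,\delta(y)$ for fixed $x$, this holds if and only if $\rho\in\mathcal{M}(\Omega,\delta)$ --- the very statement you are trying to prove; likewise the Riesz decomposition of the subharmonic function $u^-$ presupposes a harmonic majorant, whose existence you assert (``finite at an interior point'') rather than prove. The circle cannot be closed, because the lemma is false without a sign condition on $u$: take $\Omega=B_1$ and $u(x)=\log(1-|x|)$. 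Then $u\in L^1(B_1)\cap W^{1,1}_{loc}(B_1)$ and $-\Delta u=\frac{1}{(1-r)^2}+\frac{N-1}{r(1-r)}=:\mu\ge0$ belongs to $\mathcal{M}_{loc}(B_1)$, yet $\int_{B_1}\delta\,d\mu\sim\int_0^1\frac{dr}{1-r}=\infty$ (in this example $u^-$ has no harmonic majorant and $\mathbb{G}\rho\equiv+\infty$). Note that the paper's own proof also uses $u\ge0$ implicitly: discarding the term $-n\,u\,\Phi''(n\xi)\,A^{*}\nabla\xi\cdot\nabla\xi$ in $\int_\Omega u\,L^{*}\varphi_n$ requires $u\ge0$, which is harmless there since the lemma is only applied to positive solutions of \eqref{1}. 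So the correct move was to record the (implicit) hypothesis $u\ge0$ --- after which your first computation already finishes the proof --- rather than to attempt the general case.
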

\begin{proof}
Let $\Phi$  be a convex smooth function  with bounded $\Phi'$ \bk and which vanishes in a neighborhood of $0$. For instance,  for a fixed $k>0$, a good choice is to consider $\Phi$ as  a convex smooth  function that agrees with  $|G_{k}(s)|$ for every $s$ but $|s|\in [k,2k]$\bk. Moreover, let $\xi$ be the solution to 
\begin{equation}\label{xi}
\begin{cases}
\displaystyle L^{*} \xi = 1 &  \text{in}\, \Omega, \\
\xi=0 & \text{on}\ \partial \Omega.
\end{cases}
\end{equation}

\noindent In order to conclude  it suffices to prove that 
$$
\into \xi\mu \leq C,
$$
as, by Hopf's lemma, one can deduce that   $\xi\geq c\delta$ on $\Omega$. 

Let $\varphi_{n}:= \frac{1}{n}{\Phi (n\xi(x))}$. It is easy to check that $\varphi_{n}$ has compact support in $\Omega$ and that $\varphi_{n}$ converges to $\xi$ a.e. in $\Omega$. Using \eqref{lipA} and the convexity of $\Phi$, we have, in the sense of distributions  
$$
L^{*} \varphi_{n}= \Phi'(n\xi)L^{*} \xi - A^*(x)\nabla \xi\cdot \nabla \xi \Phi''(n\xi) \leq \Phi'(n\xi).
$$

Then
$$
\into \varphi_{n}\mu \le \into  u L^{*}\varphi_{n}\ \leq \|\Phi'\|_{\infty}\into u\leq C,
$$
and the proof is complete using the Fatou lemma. 
\end{proof}

\begin{remark} First important remark is that Lemma \ref{M1respectdelta} applies to distributional solutions to our problem \eqref{1} yielding that both $h(u)\mu_{d}$ and $\mu_{c}$ belongs to $\mathcal{M}(\Omega,\delta)$. Observe that this  fact is quite general and, for instance, it does  not require any monotonicity on $h$.  Also remark that, if $\mu\in L^{1}(\Omega)$ then the same argument shows that $h(u)\mu\in L^{1}(\Omega, \delta)$ (see also  \cite{edr} where analogous  estimates were proven in the case $\mu\in L^{\infty}(\Omega)$ and $h(s)={s^{-\gamma}}$).   
\end{remark}

We are now in the position to prove Theorem \ref{uniquenessmain}. We  stress  that no control on the function $h$ at infinity nor at zero  is  required for the following proof to hold. 
\begin{proof}[Proof of Theorem \ref{uniquenessmain}]
	It follows from Lemma \ref{M1respectdelta} that $h(u)\mu_d + h(\infty)\mu_c$ belongs to $\mathcal{M}(\Omega,\delta)$. In particular being $\mu_d$ a diffuse measure and $h(u)\delta$ measurable with respect to $\mu_d$ then $h(u)\mu_d$ is itself a diffuse measure in $\mathcal{M}(\Omega,\delta)$. 
	\noindent \\We consider a distributional solution $u$ in the sense of Definition \ref{def}, that  is it satisfies 
		\begin{equation*} 
		\displaystyle \int_{\Omega}A(x)\nabla u \cdot \nabla \varphi =\int_{\Omega} h(u)\varphi\mu_d + h(\infty)\int_{\Omega} \varphi\mu_c, \ \ \ \forall \varphi \in C^1_c(\Omega).
		\end{equation*}
	First step is observing that taking in the previous $\varphi= \eta_{k} \phi$ where $ \phi \in C^1_0(\overline{\Omega})$ such that $L^{*}\phi\in L^{\infty}(\Omega)$,  \bk  and $\eta_{k}\in C^{\infty}_{c}(\Omega)$ is such that $0\le\eta_{k}\le1$, $\eta_{k}=1$ when $\delta(x) > \frac{1}{k}$, $||\nabla \eta_{k}||_{L^\infty(\Omega)}\le k$,  and $||L^{*} \eta_{k}||_{L^\infty(\Omega)}\le Ck^2$, we have
		\begin{equation}\label{uniquenessteo1} 
		\displaystyle \int_{\Omega}A(x)\nabla u \cdot \nabla (\eta_{k} \phi) =\int_{\Omega} h(u) \eta_{k} \phi\mu_d + h(\infty)\int_{\Omega}  \eta_{k} \phi\mu_c. 
		\end{equation}
	We want to pass to the limit in $k$ the previous formulation.  Since $h(u)\mu_d + h(\infty)\mu_c$ belongs to $\mathcal{M}(\Omega,\delta)$ and $\phi \in C^1_0(\overline{\Omega})$ we use dominated convergence theorem to pass to the limit on the right hand side of \eqref{uniquenessteo1}. 	
	Concerning the left hand side of \eqref{uniquenessteo1} we have
		\begin{equation}\label{uniquenessteo1lhs} 
		\displaystyle \int_{\Omega}A(x)\nabla u \cdot \nabla (\eta_{k} \phi) = - \int_{\Omega}A^{*}(x) \nabla \eta_{k}\cdot \nabla \phi u + \int_{\Omega}u\eta_{k} L^{*}\phi - \int_{\Omega}A^{*}(x) \nabla \phi\cdot \nabla \eta_{k} u + \int_{\Omega}u\phi L^{*} \eta_{k}. 
		\end{equation}	
		Since $L^{*}\phi\in L^{\infty}(\Omega)$, again by dominated convergence we have 
		$$
		\int_{\Omega}u\eta_{k} L^{*}\phi \stackrel{k\to\infty}{\longrightarrow} \int_{\Omega}u L^{*}\phi\,.
		$$
	It follows from  \eqref{bd} that 
		\begin{equation*}
		\displaystyle \lim_{k\to\infty}\int_{\Omega}|A^{*}(x) \nabla \eta_{k}\cdot \nabla \phi u| \le  \lim_{k\to\infty}\beta Ck\int_{\{\delta< \frac{1}{k}\}}u=0,
		\end{equation*}
so that the first and the third term on the right hand side of  \eqref{uniquenessteo1lhs}  vanish, while the last one  can be treated as follows
		\begin{equation*}
		\displaystyle\lim_{k\to\infty}\int_{\Omega}|u\phi L^{*} \eta_{k}| \le \lim_{k\to\infty}\beta Ck^2\int_{\{\delta< \frac{1}{k}\}}u|\phi|\le \lim_{k\to\infty}\beta Ck\int_{\{\delta< \frac{1}{k}\}}u = 0.
		\end{equation*}
	Collecting the previous  we deduce that
		\begin{equation}\label{udistribuzionaleveron} 
		\displaystyle  \int_{\Omega}u L^{*} \phi =\int_{\Omega} h(u) \phi\mu_d + h(\infty)\int_{\Omega} \phi\mu_c, \ \ \ \forall \phi \in C^1_0(\overline{\Omega}). 
		\end{equation}\bk

\medskip 
	 				
Now, by applying \eqref{udistribuzionaleveron} to the difference of two distributional solutions  $v$ and  $w$ of \eqref{1} we have
	 	 		\begin{equation*}
	 	 		\displaystyle \int_{\Omega} (v-w) L^{*} \xi =\int_{\Omega} (h(v)-h(w)) \xi \mu_d,
	 	 		\end{equation*}	
	  for every $\xi \in C^1_0(\overline{\Omega})$ such that $L^{*} \xi\in L^\infty(\Omega)$. 	 		
	As $(h(v)-h(w)) \mu_d$  is a diffuse measure in $\mathcal{M}(\Omega,\delta)$,  we can apply Lemma \ref{Katoinequalitylocal} to obtain
			\begin{equation*}
			\displaystyle  \int_{\Omega} (v-w)^+ L^{*} \varphi \le \int_{\{v\ge w\}} (h(v)-h(w))\varphi \mu_d\le 0,
			\end{equation*}	
	for every $0\le\varphi\in C^1_c(\Omega)$ such that $L^{*} \varphi \in L^\infty(\Omega)$. Now  we reason  again as in the proof of  \eqref{udistribuzionaleveron}; we consider $\xi\eta_{k}$ where $\eta_{k}$ is defined as before and $\xi$ is defined by \eqref{xi}, and we deduce that 
				\begin{equation*}
			\displaystyle  \int_{\Omega} (v-w)^+ L^{*} \xi \le 0\,,
			\end{equation*}
		that,	by definition of   $\xi$, allows us to conclude that $v\le w$. The proof is completed by interchanging the roles of $v$ and $w$.
\end{proof}

\section{Some  further remarks, extensions, and open problems}\label{furte}
\setcounter{equation}{0}
In Section   \ref{finite} we analyzed some instances of   finite energy solutions to problem    \eqref{1f} depending on both the regularity of the datum $f$ and on the behavior of the nonlinearity $h$ both at zero and at infinity.  Observe that, the strongly singular case  at infinity (i. e.  $\theta <1$) of Theorem \ref{34},  can not be treated  in general.  The pathological phenomenon shown by  Example \ref{luigi}, in fact, is essentially due to the behavior at infinity of $h$; this fact  is highlighted by the following suitable extension  of this example. 

 \begin{example}\label{ex3}
For $N>2$ and  a fixed $\theta<1$,  we choose  two parameters $m$ and $q$ such that 
$$
1\leq m<q<\frac{2N}{N+2}\, \ \ \ \text{and}\ \ \ 0<\theta\leq \frac{N(q-m)}{m(N-2q)}\,.  
$$
Notice that this choice is always possible since the function $ \nu(m,q)=\frac{N(q-m)}{m(N-2q)}$ is continuous around $(m,q)=(1,\frac{2N}{N+2})$ and  $ \nu(1,\frac{2N}{N+2})=1^{}$. 
Now  for  suitable $0\leq f\in L^{q}(\Omega)$, one can consider again the solution $u$ to 
	\begin{equation*}
	\begin{cases}
	\displaystyle -\Delta u= f &  \text{in}\, \Omega, \\
	u=0 & \text{on}\ \partial \Omega,
	\end{cases}
	\end{equation*}
 such that  $u$ is $W^{1,q^{*}}_{0}(\Omega)$ but $u\not\in H^1_0(\Omega)$. 
 Let $h$ satisfying both \eqref{h1} and \eqref{h2} with, resp.,  $\gamma>0$ and $\theta<1$. Then $u$ satisfies 
	\begin{equation*}
	\begin{cases}
	\displaystyle -\Delta u= h(u) g &  \text{in}\, \Omega, \\
	u=0 & \text{on}\ \partial \Omega,
	\end{cases}
	\end{equation*}
where $g=fh(u)^{-1}$. 	We claim that  $g$ in $L^m (\Omega)$; indeed, recalling that $m<q$ and also using  H\"older's inequality, one has 
	$$
	\begin{array}{l}\dys \into  g^{m}= \int_{\{u<\underline{\omega}\}} f^{m}u^{\gamma m} + \int_{\{\underline{\omega}\leq u\leq \overline{\omega}\}}f^{m}h(u)^{-m} +\int_{\{u>\overline{\omega}\}}f^{m}u^{\theta m} \\  \\
	\dys \leq \underline{\omega}^{\gamma m }\into f^{m}  +  \max_{[\underline{\omega},\overline{\omega}]}( {h(s)}^{-m})\into f^{m} + C_{}\left(\into u^{\frac{\theta m q}{q-m}}\right)^{\frac{q-m}{q}} \leq C 
	\end{array}$$
	since, thanks to the  choices of both $q$ and $m$, we have   $\frac{\theta m q}{q-m}<\frac{Nq}{N-2q}= q^{**}$. We then have nonnegative  data in  $L^{m}(\Omega)$ \bk  for which solutions to our problem need  not to belong to $\huz$,  for $\theta<1$.   Observe that the behavior at zero (i.e. $\gamma$) plays no role and that $m$ needs to be  close to $1$ (to be compared with the extra assumptions required  in Theorem \ref{gamma<1}).  
\end{example}

As we saw, the model case (i. e. $h(s)=s^{-\gamma}$, $\gamma>1$) of problem \eqref{1} is covered by  Theorem \ref{teosharp}. The proof is based on Theorem \ref{cino} which is not known for a general nonlinearity $h$; though the proof in \cite{sz}  can not be directly generalized due to some technical homogeneity issues, one can conjecture that it still holds, at least if $h(s)s$ is nonincreasing. That is, in this case, one could have that \eqref{1} has a finite energy solution if and only if there exists $u_{0}\in\huz$ such that 
$$
\into f h(u_{0})u_{0} <\infty\,.
$$

\medskip 

We showed that,  for strongly singular  (at infinity) $h$'s then solutions need not to have finite energy in general. In the model case, one also has  the following natural  regularity criterion:
\begin{proposition}
Let $\gamma<1$, $f\in \luo$,  and $u$ solves  \eqref{1cin} then $u\in\huz$ if and only if $$\into fu^{1-\gamma}<\infty\,.$$ 
\end{proposition}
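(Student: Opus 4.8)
The plan is to prove the two implications separately, the direct one being essentially immediate and the converse one carrying the analytical content. Throughout I denote by $u_n$ the approximating solutions to the desingularized problems \eqref{1n} associated to $h(s)=s^{-\gamma}$ (so that $h_n(s)=T_n(s^{-\gamma})$ and $f_n=T_n(f)$), which converge a.e. to the unique solution $u$ of \eqref{1cin}; uniqueness here is guaranteed since $h(s)=s^{-\gamma}$ is nonincreasing.

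For the implication $u\in\huz \Rightarrow \into f u^{1-\gamma}<\infty$, I would simply invoke Theorem \ref{bocat}. Since $u\in\huz$ is a distributional solution to \eqref{1cin}, it satisfies the weak formulation \eqref{H10test} for test functions in $\huz$, so taking $\phi=u$ gives
$$\into A(x)\nabla u\cdot\nabla u = \into u^{-\gamma} f\, u = \into f u^{1-\gamma}.$$
The left-hand side is finite, being bounded by $\beta\into|\nabla u|^2$ with $u\in\huz$ and $A$ bounded, whence $\into f u^{1-\gamma}<\infty$.

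For the converse implication, which is the heart of the statement, the plan is to derive a uniform $\huz$ bound on $u_n$. First I would test \eqref{1n} with $u_n$ and use ellipticity to obtain $\alpha\into|\nabla u_n|^2\le\into h_n(u_n) f_n u_n$. The elementary pointwise bound $h_n(s)s\le s^{1-\gamma}$ together with $f_n\le f$ then yields $\into h_n(u_n) f_n u_n\le\into f u_n^{1-\gamma}$. The decisive ingredient is monotonicity: since $h(s)=s^{-\gamma}$ is nonincreasing, the comparison principle shows $u_n$ is nondecreasing in $n$ (see \cite{bo,do}), so that $u_n\le u$ a.e.; as $1-\gamma>0$ the map $t\mapsto t^{1-\gamma}$ is increasing, giving $\into f u_n^{1-\gamma}\le\into f u^{1-\gamma}<\infty$. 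Hence $\into|\nabla u_n|^2\le C$ uniformly in $n$, so $u_n$ is bounded in $\huz$ and, up to a subsequence, converges weakly in $\huz$; since $u_n\to u$ a.e. the weak limit is $u$, and weak lower semicontinuity of the Dirichlet energy gives $u\in\huz$.

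The main obstacle is precisely the upper bound on $\into f u_n^{1-\gamma}$: Fatou's lemma controls only the liminf from below, so a domination of $u_n$ by the fixed integrable majorant $f u^{1-\gamma}$ is indispensable, and this is exactly what the monotonicity $u_n\nearrow u$ supplies. Without it one would be forced into a more delicate argument, for instance splitting $u_n=T_k(u_n)+G_k(u_n)$ and separately controlling the tail contribution $\into f\,G_k(u_n)^{1-\gamma}$, which is far less transparent. Everything else reduces to the routine testing and semicontinuity steps once this domination is secured.
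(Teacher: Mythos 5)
Your proof is correct and follows essentially the same route as the paper: the forward implication via Theorem \ref{bocat} with $u$ as test function, and the converse by testing the approximate problems with $u_n$, dominating $\into h_n(u_n)f_n u_n$ by $\into f u^{1-\gamma}$ through the monotonicity $u_n\nearrow u$ (valid since $h$ is nonincreasing) and $1-\gamma>0$, then concluding by weak lower semicontinuity. The only difference is the choice of desingularization (you truncate $h$ as in \eqref{1n}, the paper uses the shift $(u_n+\frac1n)^{-\gamma}$), which is immaterial since both schemes are monotone in $n$ and converge to the unique solution.
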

\begin{proof}
$(\Rightarrow)$ Apply Theorem \ref{bocat} and use $u$ as test in \eqref{1cin}. 

$(\Leftarrow)$  One considers the approximating solutions $u_{n}$ to 
$$\begin{cases}
-{\rm div} (A(x)\nabla  u_n)= \frac{f_n}{(u_{n}+\frac{1}{n})^{\gamma}} & \text{in}\;\ \Omega,\\
u_n=0 & \text{on}\;\ \partial\Omega,
\end{cases}$$
 and use $u_{n}$ as test, to obtain
$$
\alpha\into |\nabla u_{n}|^{2}\leq \into \frac{f_{n}}{(u_{n}+\frac{1}{n})^{\gamma}}u_{n}\leq \into f_{n} u_{n}^{1-\gamma} \leq \into f u^{1-\gamma}\,,
$$
where in the last step we also used that $u_{n}$ increases with respect to $n$, and we conclude by weak  lower semicontinuity. 
\end{proof}

One could conjecture that this kind of regularity principle, specializing in some sense the one given by Theorem \ref{cino}, extends for any $\gamma>0$ (or, more, for any $h$). That is  one expects that, given a nonnegative $f\in L^{1}(\Omega)$ and the solution $u$  to \eqref{1f}, then 
$$u\in\huz\ \text{ if and only if}\ \  \into f h(u)u<\infty\,.$$
Notice that solutions in  Example \ref{ex} satisfy (sharply) this criterion. In fact, if  $\gamma>1$ then $\into fu^{1-\gamma}<\infty$ if and only if $\gamma<3-\frac{2}{m}$ if and only if $u\in\huz$.

\medskip

 Consider now the threshold $\frac{1}{1-\gamma}$ given   in Theorem \ref{gamma<1L1}. As we said it can not be improved (at Lebesgue's scale) in view of  Example \ref{ex}. Although, the borderline case remains open. A straightforward modification in the proof of Theorem \ref{gamma<1L1},  using suitable generalized H\"older's inequalities,  shows that the result still holds true if $f$ belongs to the Lorentz space $L^{\frac{1}{1-\gamma},1}(\Omega)$. 
 The following  example contains an explicit instance of  this occurrence in  the case  of problem  \eqref{1f}. 
\bk 
\begin{example}
Let  $h$ satisfying \eqref{h1} with $\gamma<1$, $a>1$, and consider  \begin{equation}f(x)=\frac{1}{\delta(x)^{1-\gamma}(-\log (\delta(x)))^{a}}\,.\label{f}\end{equation} 

Without loss of generality we are assuming ${\rm diam} (\Omega)<e^{-1}$;  otherwise we define $f$ as in \eqref{f} near the boundary and we suitably truncate it  inside of $\Omega$. Reasoning as in the proof of Theorem \ref{gamma<1L1} one can show that $ u\geq v_{1}\geq C\delta $ where $u$ is the solution to \eqref{1f}.  Then, as $a>1$,  one has 
$$ \begin{array}{l}
\dys  \into  h(u)f\leq  \int_{\{u<\underline{\omega}\}} f u^{-\gamma} + \int_{\{u\geq \underline{\omega}\}}  h(u)f  \leq \int_{\{u<\underline{\omega}\}} f v_{1}^{-\gamma}+\sup_{[\underline\omega,\infty)} h(s)\int_{\{u\geq \underline{\omega}\}} f  \\ \\ \dys\leq C^{-\gamma} \into \frac{1}{\delta(-\log (\delta))^{a}}+C<\infty\,. \end{array}
$$
\end{example}

\bk
\medskip 

One last  interesting remark concerns the way the homogeneous boundary datum  is assumed.  As we observed, in general, one is not able to prove that the solution has a trace in the classical sense (at least if $\gamma>1$) and we get rid of this fact by introducing the relaxed boundary condition \eqref{bd}. Anyway, in view  of Lemma \ref{M1respectdelta},  we are again  in presence of  a  borderline case. Consider for simplicity a function $f$ as  datum;   in \cite{dr} the authors prove that, if the lower order term ($h(u)f$ in our case) belongs to $L^1(\Omega, \delta^\alpha)$, for some $0<\alpha<1$, then $u\in W^{1,q}_{0}(\Omega)$, for some $q>1$. Moreover, if   $h(u)f\in L^1 (\Omega, \delta|\log(\delta)|)$, then  $u\in W^{1,1}_{0}(\Omega)$. The solutions of Example \ref{ex} are continuous up to the boundary and  we are far from this pathological behavior; although, only observe that in that case one always has that $fu^{-\gamma}\in L^1(\Omega, \delta^\alpha)$, for $\alpha>\frac{\gamma}{1+\gamma}$.

\section{Appendix}
\setcounter{equation}{0}
\subsection{Proof of Theorem \ref{bocat}}

	Let $\phi\in \huz $ and consider  $\psi_n$  a sequence in $C^1_c(\Omega)$ that converges to  $\phi$ in $H^1_0(\Omega)$. For $\vare>0$ we take   $\varphi= \sqrt{\vare^2+|\psi_n-\psi_k|^2}-\vare$ as a test function in the distributional formulation of $u$. 
	Using H\"older's inequality one gets 
$$\begin{array}{l}
		\displaystyle\int_{\Omega} h(u)f(\sqrt{\vare^2+|\psi_n-\psi_k|^2}-\vare) = \int_{\Omega}A(x)\nabla u\cdot  \frac{\nabla(\psi_n-\psi_k)(\psi_n-\psi_k)}{\sqrt{\vare^2+|\psi_n-\psi_k|^2}} \nonumber \\  \dys \le \beta||u||_{H^1_0(\Omega)}\left(\int_{\Omega}\frac{|\nabla(\psi_n-\psi_k)|^2(\psi_n-\psi_k)^2}{\vare^2+|\psi_n-\psi_k|^2}\right)^{\frac{1}{2}}\le \beta||u||_{H^1_0(\Omega)}||\psi_n-\psi_k||_{H^1_0(\Omega)}.
		\label{stimateoboc1}\end{array}
$$
	Therefore, by Fatou's Lemma
	\begin{equation}
		\displaystyle \int_{\Omega} \left|h(u)f\psi_n - h(u)f\psi_k \right| \le \beta||u||_{H^1_0(\Omega)}||\psi_n-\psi_k||_{H^1_0(\Omega)}.
		\label{stimateoboc2}
	\end{equation}
	Estimate \eqref{stimateoboc2} implies that the sequence $\displaystyle h(u)f\psi_n$ is a Cauchy sequence in $L^1(\Omega)$,  
	and that converges to its a.e.  limit  $\displaystyle h(u)f\phi$. We  then take $\psi_{n}$ as test in \eqref{distri} and we can pass to the limit  obtaining that  $u$ satisfies \eqref{H10test}. 
	
	 To prove uniqueness we consider  two solutions $u_1$ and  $u_2$  of \eqref{1f}  in $\huz$.    We are allowed to choose  $(u_1-u_2)^-$ as a test function in \eqref{H10test} for  both  $u_1, u_2$. Taking the difference we get 
	$$ \displaystyle \int_{\Omega}A(x)\nabla (u_1-u_2)\cdot\nabla (u_1-v_2)^- \\\\ \dys = \int_{\Omega} \left(h(u_1)f-h(u_2)f\right)(u_1-u_2)^- \ge 0$$
	that implies, using the ellipticity of $A$, that   $u_1\geq u_{2} $. The opposite inequality is obtained by interchanging the roles of $u_{1}$ and $u_{2}$.  \hfill $\square$

\begin{remark}
	Note that the proof of Theorem  \ref{bocat}  is essentially based on the ellipticity of the operator and so it can be easily extended to fairly general classes of problems as, for instance, the ones involving Leray-Lions type nonlinear operators in general domains. We also remark that, one do not need the knowledge of the behavior of $h$ neither at zero nor at the infinity (we only use that $h(u)$ is locally bounded).   
\end{remark}

\subsection*{Acknowledgements} We would like to warmly thank both Luigi Orsina and Augusto C. Ponce for fruitful discussions and, in particular, the ones concerning, resp.,   Example \ref{luigi} and  Lemma \ref{M1respectdelta}.


\end{document}